\numberwithin{equation}{section}
\newtheorem{Theorem}{Theorem}[section]
\newtheorem{Corollary}[Theorem]{Corollary}
\newtheorem{Lemma}[Theorem]{Lemma}
\newtheorem{Proposition}[Theorem]{Proposition}
 { \theoremstyle{definition}
\newtheorem{Definition}[Theorem]{Definition}
\newtheorem{Example}[Theorem]{Example}
\newtheorem{Remark}[Theorem]{Remark} }
\def\Hom{\mathop{\rm Hom}\nolimits}
\def\Ext{\mathop{\rm Ext}\nolimits}
\def\mod{\mathop{\rm mod}\nolimits}
\def\rad{\mathop{\rm rad}\nolimits}
\def\tilt{\mathop{\rm \tau\makebox{-}tilt}\nolimits}
\def\stilt{\mathop{\rm s\tau\makebox{-}tilt}\nolimits}
\def\pstilt{\mathop{\rm ps\tau\makebox{-}tilt}\nolimits}
\newcommand{\zL}{\Lambda}
\begin{document}
\allowdisplaybreaks

\newcommand{\arXivNumber}{2002.02990}

\renewcommand{\thefootnote}{}

\renewcommand{\PaperNumber}{058}

\FirstPageHeading

\ShortArticleName{On the Number of $\tau$-Tilting Modules over Nakayama Algebras}

\ArticleName{On the Number of $\boldsymbol{\tau}$-Tilting Modules\\ over Nakayama Algebras\footnote{This paper is a~contribution to the Special Issue on Cluster Algebras. The full collection is available at \href{https://www.emis.de/journals/SIGMA/cluster-algebras.html}{https://www.emis.de/journals/SIGMA/cluster-algebras.html}}}

\Author{Hanpeng GAO~$^\dag$ and Ralf SCHIFFLER~$^\ddag$}

\AuthorNameForHeading{H.~Gao and R.~Schiffler}

\Address{$^\dag$~Department of Mathematics, Nanjing University, Nanjing 210093, P.R.~China}
\EmailD{\href{mailto:hpgao07@163.com}{hpgao07@163.com}}

\Address{$^\ddag$~Department of Mathematics, University of Connecticut, Storrs, CT 06269-1009, USA}
\EmailD{\href{mailto:schiffler@math.uconn.edu}{schiffler@math.uconn.edu}}

\ArticleDates{Received March 06, 2020, in final form June 11, 2020; Published online June 18, 2020}

\Abstract{Let $\Lambda^r_n$ be the path algebra of the linearly oriented quiver of type $\mathbb{A}$ with~$n$ vertices modulo the $r$-th power of the radical, and let $\widetilde{\Lambda}^r_n$ be the path algebra of the cyclically oriented quiver of type $\widetilde{\mathbb{A}}$ with $n$ vertices modulo the $r$-th power of the radical. Adachi gave a recurrence relation for the number of $\tau$-tilting modules over $\Lambda^r_n$. In this paper, we show that the same recurrence relation also holds for the number of $\tau$-tilting modules over $\widetilde{\Lambda}^r_n$. As an application, we give a new proof for a result by Asai on recurrence formulae for the number of support $\tau$-tilting modules over $\Lambda^r_n$ and $\widetilde{\Lambda}^r_n$.}

\Keywords{$\tau$-tilting modules; support $\tau$-tilting modules; Nakayama algebras}

\Classification{16G20; 16G60}

\section{Introduction}

The starting point of {\em tilting theory} was the introduction of tilting modules over a hereditary algebra by Happel and Ringel in \cite{HR}. Ever since, the study of tilting modules and their endomorphism algebras has been an important branch of representation theory.

About 25 years later, {\em cluster-tilting theory}, introduced in \cite{BMRRT} (and \cite{CCS} in type $\mathbb{A}$), brought a~new perspective to the subject by replacing the hereditary algebra by its cluster category, and a~direct relation between tilted and cluster-tilted algebras was then established in~\cite{ABS}.

In 2014, Adachi, Iyama and Reiten \cite{AIR} introduced {\em $\tau$-tilting theory} replacing the rigidity condition $\Ext^1_{\Lambda}(T,T)=0$ for a~tilting module by the weaker condition $\Hom_\Lambda(T,\tau_\Lambda T)=0$ for a~$\tau$-tilting module, where $\tau$ denotes the Auslander--Reiten translation, and $\Lambda$ is any finite-dimensional algebra. They showed that, in contrast to tilting modules but in agreement with cluster-tilting objects, it is always possible to exchange a~given indecomposable summand of a~support $\tau$-tilting module for a~unique other indecomposable and obtain a new support $\tau$-tilting module. This process, called mutation, is essential in cluster theory.

In the same paper, the authors also showed that the support $\tau$-tilting modules are in bijection with several other important classes in representation theory including functorially finite torsion classes introduced in~\cite{AS1981}, 2-term silting complexes introduced in \cite{KV1988}, and cluster-tilting objects in the cluster category when the algebra $\Lambda$ is hereditary, or, more generally, cluster-tilted.

Therefore, it is natural to ask what is the number of support $\tau$-tilting modules over a given algebra.

For a hereditary algebra, the support $\tau$-tilting modules are exactly the support tilting mo\-dules. Moreover they are in bijection with the cluster-tilting objects in the cluster category and hence with the clusters in the corresponding cluster algebra. For Dynkin type, these numbers were first calculated in \cite{FZYsyst} via cluster algebras, and later in \cite{ONFR} via representation theory. In particular, over a hereditary algebra of type $\mathbb{A}_n$, the number of tilting modules is~$C_n$, and the number of support tilting modules is $C_{n+1}$ where $C_i$ denotes the $i$-th Catalan number $\frac{1}{i+1}\binom{2i}{i}$.

In this paper, we study this question over finite-dimensional Nakayama algebras. Recall that a finite-dimensional $K$-algebra is said to be a {\em Nakayama algebra} if every indecomposable projective module and every indecomposable injective module has a unique composition series. Nakayama algebras come in two types, in fact, a finite-dimensional algebra is Nakayama if and only if its quiver is one of the following
\begin{gather*}\xymatrix@!@R=2pt@C=2pt{
A_n\colon &1\ar[r]&2\ar[r]&3\ar[r]&\cdots\ar[r]&n,&&\widetilde{A}_n\colon &1\ar[r]&2\ar[r]&3\ar[r]&\cdots\ar[r]&n,\ar@/_20pt/[llll]
}\end{gather*}
see \cite[Section~V.3.2]{ASS2006}. Throughout the paper, we use the following notation
\begin{gather*} \Lambda_n^r=KA_n/\rad^r \qquad \text{and}\qquad \widetilde{\Lambda}_n^r=K\widetilde{A}_n/\rad^r.\end{gather*}
Moreover, we let $t_r(n)$ and $\tilde t_r(n)$ denote the number of $\tau$-tilting modules over $\Lambda^r_n$ and $\widetilde{\Lambda}^r_n$, and let~$s_r(n)$ and~$\tilde s_r(n)$ denote the number of support $\tau$-tilting modules over $\Lambda^r_n$ and $\widetilde{\Lambda}^r_n$, respectively. We also set $t_r (n) := 0$ and $s_r (n) := 0$ for $n < 0$.

Adachi classified $\tau$-tilting modules over Nakayama algebras in~\cite{Ada2016}. Under the assumption that the Loewy length of every indecomposable projective module is at least~$n$, Adachi showed that the number of $\tau$-tilting modules is exactly $\binom{2n-1}{n-1}$ and the number of support $\tau$-tilting modules is $\binom{2n}{n}$. Moreover, he also gave the following recurrence relation for the number $t_r(n)$ of $\tau$-tilting modules over $\Lambda^r_n$
\begin{gather*}t_r(n)=\sum^r_{i=1}C_{i-1}\cdot t_r(n-i).\end{gather*}

The aim of this paper is study the number of $\tau$-tilting modules over $\widetilde{\Lambda}^r_n$. We show that there is a close relationship between the number $\tilde t_r(n)$ of $\tau$-tilting $\widetilde{\Lambda}_n^r$-modules and the number $t_r(n)$ of $\tau$-tilting $\Lambda_n^r$-modules.

\begin{Proposition}[see~Proposition \ref{3.7}]
\begin{gather*}\tilde t_r(n)=\sum^r_{i=1}i\cdot C_{i-1}\cdot t_r(n-i).\end{gather*}
\end{Proposition}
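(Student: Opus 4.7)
The plan is to reduce the cyclic count $\tilde t_r(n)$ to the linear count $t_r(n)$ by a combinatorial decomposition that refines Adachi's proof of his own recurrence $t_r(n)=\sum_{i=1}^r C_{i-1}\,t_r(n-i)$. Recall that Adachi's recurrence arises by examining, in any $\tau$-tilting $\Lambda_n^r$-module, the unique indecomposable summand whose top is the source vertex $1$: this summand has some length $i\in\{1,\dots,r\}$, its ``local neighbourhood'' (the summands whose support is contained in the first $i$ vertices) contributes the factor $C_{i-1}$, and the remaining summands constitute a $\tau$-tilting $\Lambda_{n-i}^r$-module supported on the remaining $n-i$ vertices.

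For $\widetilde{\Lambda}_n^r$ the quiver $\widetilde{A}_n$ has no distinguished source, but one can still single out vertex $1$ and consider the unique indecomposable summand $N$ of a $\tau$-tilting module $\widetilde M$ whose support contains $S_1$ as a composition factor. The new feature is that in $\widetilde A_n$ vertex $1$ has both an incoming and an outgoing arrow, so the composition factor $S_1$ can occupy \emph{any} of the $i$ positions within the composition series of $N$, not only the top. Each such placement corresponds to a distinct cyclic rotation of the cut used to unwrap the cycle into a linear quiver of type $A$, and this is exactly what accounts for the extra factor $i$ compared with Adachi's formula.

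Concretely, I would define a map $\widetilde M\mapsto (i,k,c,T)$, where $i$ is the length of $N$, $k\in\{1,\dots,i\}$ records the position of $S_1$ in the composition series of $N$, $c$ is a ``Catalan configuration'' of rank $i-1$ encoding the summands of $\widetilde M$ whose support is contained in that of $N$, and $T$ is a $\tau$-tilting $\Lambda_{n-i}^r$-module on the complementary $n-i$ vertices obtained by deleting the support of $N$ and straightening the resulting linear quiver. Then I would verify that this map is a bijection by explicitly describing the inverse: given a quadruple $(i,k,c,T)$, one reinserts $N$ into the cycle with $S_1$ in position $k$, adjoins the neighbourhood summands prescribed by $c$, and glues $T$ along the complementary vertices. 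Partitioning $\tilt\widetilde{\Lambda}_n^r$ by the value of $i$ and counting the $4$-tuples yields $\tilde t_r(n)=\sum_{i=1}^r i\cdot C_{i-1}\cdot t_r(n-i)$.

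The main obstacle is proving that the $\tau$-tilting property of $\widetilde M$ translates \emph{exactly} into the $\tau$-tilting property of $T$ together with a Catalan-type compatibility on the neighbourhood of $N$. One must control the condition $\Hom_{\widetilde{\Lambda}_n^r}(\widetilde M,\tau\widetilde M)=0$ across the two cuts, where the Auslander--Reiten translation on $\widetilde{\Lambda}_n^r$ differs from its linear counterpart precisely in how indecomposables ``wrap around'' the cycle. I expect this verification to rest on the combinatorial description of $\tau$-rigid Nakayama modules developed earlier in the paper, and the key lemma will be that, after the cut, the only potentially obstructing morphisms are those internal to the neighbourhood of $N$, which are precisely what the $C_{i-1}$ Catalan configurations are designed to parametrise.
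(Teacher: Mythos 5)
Your proposal rests on a stratified bijection $\widetilde M\mapsto(i,k,c,T)$ built around ``the unique indecomposable summand $N$ whose support contains $S_1$'', and this is where it breaks down. First, such a summand is not unique: already over $\widetilde\Lambda^2_2$ the $\tau$-tilting module $P_1\oplus S_1$ has two summands with $S_1$ as a composition factor, as does $P_1\oplus P_2$. More seriously, the stratum counts themselves are wrong, so no choice of a canonical $N$ can repair the map. Over $\widetilde\Lambda^2_2$ one has $\tilde t_2(2)=3$, realized by $P_1\oplus P_2$, $P_1\oplus S_1$ and $P_2\oplus S_2$, while the formula reads $1\cdot C_0\cdot t_2(1)+2\cdot C_1\cdot t_2(0)=1+2$. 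Your $i=1$ stratum would have to contain exactly one $\tau$-tilting module whose distinguished summand is $N=S_1$ with every other summand supported on vertex~$2$; the only candidate is $S_1\oplus S_2$, which is not $\tau$-rigid since $\Hom(S_1,\tau S_2)=\Hom(S_1,S_1)\neq 0$. So the $i=1$ stratum is empty where the formula demands one element, and the $i=2$ strata are correspondingly overpopulated. This is precisely the wrap-around obstruction you flag as ``the main obstacle'': $\tau$-rigidity of $\widetilde M$ does not decouple into a condition on $T$ plus a Catalan condition near $N$, because morphisms to the Auslander--Reiten translate travel around the cycle. The proposal does not resolve this, and the decomposition it is meant to justify is false stratum by stratum.

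The paper's proof takes a detour that avoids the cyclic interaction entirely: by Adachi's bijection (Theorem~\ref{2.2}), $\tilde t_r(n)=\big|\tilt\widetilde\Lambda^r_n\big|=\big|\pstilt_{np}\widetilde\Lambda^r_n\big|$, and a \emph{proper} support $\tau$-tilting module omits some simple $S_{n-\ell}$, hence is a module over $\widetilde\Lambda^r_n/\langle e_{n-\ell}\rangle\cong\Lambda^r_{n-1}$, a linear Nakayama algebra. Stratifying by the first omitted vertex yields the sets $K^{np}_{n,\ell}$, which are identified with the sets $Y_{n-1,\ell}$ counted in Lemma~\ref{3.6}; the factor $i$ in $\sum_i i\cdot C_{i-1}\cdot t_r(n-i)$ then arises from interchanging the double sum $\sum_{\ell=0}^{r-1}\sum_{i=\ell+1}^{r}$, i.e., from the $i$ admissible values of the cut parameter $\ell$, not from the position of $S_1$ inside a distinguished summand of a $\tau$-tilting module. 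Some such linearization step, replacing $\tau$-tilting modules over the cycle by objects genuinely supported on a type~$A$ quotient before any Catalan-type counting, is what your argument is missing.
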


Next, we prove that the functions $t$ and $ \tilde t$ satisfy the same recurrence relation.

\begin{Theorem}[see Theorem~\ref{3.8}]\label{thm A}
We have the following recurrence relation
\begin{gather*}\tilde t_r(n)=\sum\limits^r_{i=1}C_{i-1}\cdot \tilde t_r(n-i). \end{gather*}
\end{Theorem}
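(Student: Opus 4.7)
My strategy is to combine Proposition~\ref{3.7} with Adachi's recurrence for $t_r(n)$ and reduce the target identity to a manifestly symmetric double sum over the $t_r(n-k)$'s.

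First I would substitute Adachi's recurrence $t_r(n-i)=\sum_{j=1}^{r}C_{j-1}\,t_r(n-i-j)$ into Proposition~\ref{3.7} to unfold the left-hand side as
\begin{gather*}
\tilde t_r(n)=\sum_{i=1}^{r}i\,C_{i-1}\,t_r(n-i)=\sum_{i,j=1}^{r}i\,C_{i-1}C_{j-1}\,t_r(n-i-j).
\end{gather*}
Second, I would compute the right-hand side of the claimed recurrence by applying Proposition~\ref{3.7} again, this time to each $\tilde t_r(n-i)$, obtaining
\begin{gather*}
\sum_{i=1}^{r}C_{i-1}\,\tilde t_r(n-i)=\sum_{i,j=1}^{r}j\,C_{i-1}C_{j-1}\,t_r(n-i-j).
\end{gather*}

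Both expressions are double sums over the same collection of values $t_r(n-k)$, indexed by $(i,j)\in\{1,\dots,r\}^2$, so their difference equals
\begin{gather*}
\sum_{i,j=1}^{r}(i-j)\,C_{i-1}C_{j-1}\,t_r(n-i-j).
\end{gather*}
Grouping by $k=i+j$, the coefficient of each $t_r(n-k)$ is $\sum_{\substack{i+j=k\\ 1\le i,j\le r}}(i-j)\,C_{i-1}C_{j-1}$, which vanishes because the factor $C_{i-1}C_{j-1}$ is symmetric under $(i,j)\leftrightarrow(j,i)$ while $i-j$ is antisymmetric. Hence the two sides coincide and Theorem~\ref{thm A} follows.

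The only real obstacle is a bookkeeping one: I would need to confirm that Adachi's recurrence and Proposition~\ref{3.7} apply uniformly for every $n$ to which we wish to apply them, using the conventions $t_r(m)=\tilde t_r(m)=0$ for $m<0$ (and treating any small-$n$ base cases by direct inspection if needed). Once this is in place, the theorem reduces to the elementary symmetry observation above, and no further input from the representation theory of $\widetilde{\Lambda}^r_n$ is required.
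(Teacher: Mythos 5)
Your proposal is correct and follows essentially the same route as the paper: apply Proposition~\ref{3.7} to both $\tilde t_r(n)$ and each $\tilde t_r(n-i)$ and then invoke Lemma~\ref{2.4}, the only cosmetic difference being that the paper groups the difference as $\sum_{\ell}\ell\,C_{\ell-1}\bigl(t_r(n-\ell)-\sum_i C_{i-1}t_r(n-\ell-i)\bigr)=0$ directly, whereas you expand both sides into double sums and finish with the symmetric/antisymmetric cancellation. The bookkeeping caveat you raise (validity of the recurrences near the boundary, i.e., for $n\geqslant r+1$) applies equally to the paper's argument, so there is no gap specific to your version.
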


As an application, we obtain a new proof for the following result by Asai on the numbers of support $\tau$-tilting modules $s_r(n)$ and $\tilde s_r(n).$
\begin{Theorem}[{\cite[Theorem 4.1]{S2018}}]\label{thm B}\quad
\begin{enumerate}\itemsep=0pt
\item[$(1)$]$s_r(n)=2s_r(n-1)+\sum\limits^r_{i=2}C_{i-1}\cdot s_r(n-i).$
\item[$(2)$]$\tilde s_r(n)=2\tilde s_r(n-1)+\sum\limits^r_{i=2}C_{i-1}\cdot \tilde s_r(n-i).$
\end{enumerate}
\end{Theorem}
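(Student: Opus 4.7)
My plan is to package both parts of Theorem~\ref{thm B} as identities of formal power series and to reduce them to Adachi's recurrence for $t_r$ and to Proposition~\ref{3.7} for $\tilde t_r$. Set
\[T(x):=\sum_{n\ge 0}t_r(n)x^n,\quad \widetilde T(x):=\sum_{n\ge 0}\tilde t_r(n)x^n,\quad S(x):=\sum_{n\ge 0}s_r(n)x^n,\quad \widetilde S(x):=\sum_{n\ge 0}\tilde s_r(n)x^n,\]
and $G(x):=1-\sum_{i=1}^{r}C_{i-1}x^i$, $P(x):=\sum_{i=1}^{r}iC_{i-1}x^i$. Adachi's recurrence reads $T(x)G(x)=1$, and Proposition~\ref{3.7} reads $\widetilde T(x)=P(x)T(x)$. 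The two parts of Theorem~\ref{thm B} are equivalent to
\[S(x)\bigl(G(x)-x\bigr)=1\qquad\text{and}\qquad \widetilde S(x)\bigl(G(x)-x\bigr)=P(x)+x;\]
since both right-hand sides are polynomials of degree at most~$r$, extracting the coefficient of $x^n$ for $n>r$ gives the claimed recurrences, with the low-degree cases checked directly.

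The starting point is the standard bijection of support $\tau$-tilting modules with $\tau$-tilting modules over quotients $\Lambda/\langle e_E\rangle$, one for each subset $E\subseteq\{1,\ldots,n\}$ of ``deleted'' vertices. Because our quotients split as products $\prod_j\Lambda_{n_j}^r$ indexed by the maximal runs in the complement of $E$, one obtains
\[s_r(n)=\sum_{E\subseteq[n]}\prod_jt_r(n_j),\qquad \tilde s_r(n)=\tilde t_r(n)+\sum_{\emptyset\neq E\subseteq[n]}\prod_jt_r(n_j),\]
with linear arc lengths in both cases (removing any non-empty $E$ from the cycle cuts it into paths).

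For part~(1), conditioning on whether vertex~$1$ lies in $E$ yields $S(x)=T(x)+xT(x)S(x)$, hence $S=T/(1-xT)$; combined with $TG=1$ this gives $S(G-x)=1$. For part~(2), I would condition on vertex~$n$: the case $n\in E$ contributes $xS(x)$ (cut the cycle at~$n$), the case $E=\emptyset$ contributes $\widetilde T(x)$, and in the remaining case the cyclic arc of length $m\in[1,n-1]$ through~$n$ admits exactly $m$ placements on $[n]$, while the complementary linear segment, of size $k:=n-m$, has both endpoints forced into $E$, contributing a factor $h(k)$ equal to $1$ for $k=1$ and to $s_r(k-2)$ for $k\ge 2$. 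Writing $M(x):=\sum_m m\,t_r(m)x^m=xT'(x)$ and $H(x):=\sum_k h(k)x^k=x+x^2S(x)$, the total is $\widetilde S=\widetilde T+xS+MH$. The differential identity $xT'=\widetilde TT$ (immediate from $TG=1$ and $\widetilde T=PT$ via $-xG'=P$) together with the factorization $1+xS=S/T$ from part~(1) reduce $MH$ to $x\widetilde TS$, yielding
\[\widetilde S(x)=\widetilde T(x)\bigl(1+xS(x)\bigr)+xS(x)=(P(x)+x)S(x),\]
so that multiplying by $G-x$ gives $\widetilde S(G-x)=P+x$.

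The main obstacle is the case analysis in part~(2), in particular the correct accounting of the $m$ placements of a length-$m$ cyclic arc containing a fixed vertex and the treatment of the ``wrap-around'' arcs containing both vertex~$1$ and vertex~$n$; once this bookkeeping is in place, the algebraic simplification via $xT'=\widetilde TT$ and $1+xS=S/T$ is essentially automatic.
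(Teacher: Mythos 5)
Your proposal is correct; I checked the key identities against Tables~\ref{t1}--\ref{t4}. At the combinatorial level you use the same input as the paper: your relation $S=T+xTS$ is exactly Proposition~\ref{3.2}(1) specialized to $\Lambda^r_n$ (the starting point of the paper's Proposition~\ref{4.1}), and your decomposition $\widetilde S=\widetilde T+xS+xT'(x)\cdot\big(x+x^2S\big)$, obtained by conditioning on the arc of the complement of $E$ containing the vertex $n$, is equivalent --- after one application of $S=T+xTS$ --- to the paper's Proposition~\ref{3.5}, which instead conditions on the sets $K_{n,\ell}$. Where you genuinely diverge is in the algebra: the paper proves both recurrences by two fairly long telescoping manipulations (its Propositions~\ref{4.1} and~\ref{4.3}), invoking Lemma~\ref{2.4}, Theorem~\ref{3.8} and Proposition~\ref{3.7} along the way, whereas you encode Adachi's recurrence and Proposition~\ref{3.7} as $TG=1$ and $\widetilde T=PT$ and read off the closed forms $S=1/(G-x)$ and $\widetilde S=(P+x)/(G-x)$; in particular you never need Theorem~\ref{3.8} for part~(2), and the intermediate identity $\widetilde S=(P+x)S$ is a clean statement not visible in the paper. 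The observation $xT'=\widetilde T\,T$ (coming from $-xG'=P$) is what makes the cyclic case collapse. One caveat about your phrase ``low-degree cases checked directly'': your own identity $\widetilde S(G-x)=P+x$ shows that recurrence~(2) genuinely \emph{fails} for $1\leqslant n\leqslant r$ (with the convention $\tilde s_r(0)=0$ the defect is $n\,C_{n-1}$, plus an extra $1$ at $n=1$), so those cases are exceptions to the recurrence rather than instances of it; this boundary issue is equally present and equally unaddressed in the paper's statement, and your formulation has the merit of making the exact range of validity explicit.
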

Asai used a bijection between support $\tau$-tilting modules and semibricks to obtain his result in the context of semibricks. Our proof is combinatorial.

The paper is organized as follows. In Section~\ref{sect 2}, we fix the notation and recall several results of Adachi that are relevant to this paper. We study the Nakayama algebras of type ${A}_n$ and $\widetilde{A}_n$ and prove Theorem~\ref{thm A} in Section~\ref{sect 3}. Theorem~\ref{thm B} is proved in Section~\ref{sect 4}. We include tables of the numbers of $\tau$-tilting and support $\tau$-tilting modules in Section~\ref{sect 5}.

\section{Preliminaries}\label{sect 2}

Throughout this paper, all algebras will be basic, connected, finite-dimensional algebras over an algebraically closed field $K$
 and all modules will be finitely generated right modules. For an algebra $\Lambda$, we denote by $\mod \Lambda$ the category of finitely generated right $\Lambda$-modules and by $\tau_{\Lambda}$ the Auslander--Reiten translation of $\Lambda$. Let $\{e_1, e_2, \dots, e_n\}$ be a complete set of primitive orthogonal idempotents of $\Lambda$. We put $P_i=e_i\Lambda$ the indecomposable projective module and $S_i=P_i/\operatorname{rad}P_i$ the simple module of~$\Lambda$ for $i=1,2,\dots,n$. For $M\in \mod \Lambda$, we denote by $\ell(M)$ the Loewy length of $M$ and by $|M|$ the number of pairwise nonisomorphic indecomposable summands of~$M$. For a finite set~$X$, we denote by~$|X|$ the cardinality of $X$.
For details on representation theory of finite-dimensional algebras we refer to~\cite{ASS2006,S}.

Let $\Lambda$ be an algebra. In this section, we recall results about support $\tau$-tilting modules that are needed later.

\begin{Definition}\label{2.1}Let $M\in\mod\Lambda$.
\begin{enumerate}\itemsep=0pt
\item[(1)] $M$ is called {\em $\tau$-rigid} if $\Hom_\Lambda(M,\tau_\Lambda M)=0$.
\item[(2)] $M$ is called {\em $\tau$-tilting} if it is $\tau$-rigid and $|M|=|\Lambda|$.
\item[(3)] $M$ is called {\em support $\tau$-tilting} if it is a $\tau$-tilting $\Lambda/\Lambda e\Lambda$-module for some idempotent $e$ of $\Lambda$.
\item[(4)] $M$ is called {\em proper support $\tau$-tilting} if it is a support $\tau$-tilting but not a $\tau$-tilting $\Lambda$-module.
\end{enumerate}
\end{Definition}

Recall that $M\in\mod \Lambda$ is called {\it sincere} if every simple $\Lambda$-module appears as a composition factor in $M$. It is well-known that the $\tau$-tilting modules are exactly the sincere support $\tau$-tilting modules \cite[Proposition 2.2(a)]{AIR}.

We will denote by $\tilt\Lambda$ (respectively, $\stilt\Lambda$, $\pstilt\Lambda$) the set of isomorphism classes of basic $\tau$-tilting (respectively, support $\tau$-tilting, proper support $\tau$-tilting) $\Lambda$-modules. Obviously, we have $|\stilt\Lambda|=|\tilt\Lambda|+|\pstilt\Lambda|$.

Let $\pstilt_{np}\Lambda:=\{M\in \pstilt\Lambda \,|\, M \ \text{has no projective direct summands}\}$. We recall the following results proved by Adachi in \cite{Ada2016}.

\begin{Theorem}[{\cite[Theorem 2.6]{Ada2016}}]\label{2.2}
Let $\Lambda$ be a Nakayama algebra. There is a bijection between $\tilt\Lambda$ and $\pstilt_{np}\Lambda$.
\end{Theorem}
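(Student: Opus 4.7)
The natural candidate for a bijection $\phi\colon \tilt\Lambda \to \pstilt_{np}\Lambda$ is ``deletion of projective summands'': write $T = T' \oplus P$ with $P$ the direct sum of all indecomposable projective summands of $T$, and set $\phi(T) = T'$. To show this is a bijection I would carry out three steps: (i) every $\tau$-tilting module over a Nakayama algebra has at least one indecomposable projective summand (so that $P \neq 0$ and $T'$ is proper), (ii) the resulting $T'$ is actually support $\tau$-tilting and not merely $\tau$-rigid, and (iii) an explicit inverse $\psi$ exists.

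Step (i) is transparent in type $A_n$: the simple $S_n$ at the sink coincides with the projective $P_n$ since $\ell(P_n)=1$, and sincerity of a $\tau$-tilting module together with the fact that the unique indecomposable with top $S_n$ has length one forces $P_n \mid T$. For the cyclic case $\widetilde{A}_n$ no vertex plays this role, and I would instead invoke the connectedness of the support $\tau$-tilting mutation graph from \cite{AIR} together with Adachi's combinatorial classification in \cite{Ada2016} of the indecomposable summands of $\tau$-tilting modules as uniserial modules encoded by intervals, to argue that at least one indecomposable projective survives under every sequence of mutations starting from $\zL$. This is the main obstacle of the proof.

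Step (ii) amounts to producing an idempotent $e$ realising $T'$ as a $\tau$-tilting $\Lambda/\Lambda e \Lambda$-module: writing $P = \bigoplus_{j=1}^{k} P_{i_j}$ and $e = \sum_{j=1}^{k} e_{i_j}$, the pair $(T',P)$ satisfies $|T'|+|P|=n$ and $\Hom_\Lambda(P,T') = 0$, because the summands of the $\tau$-tilting module $T$ have pairwise distinct tops and the tops $S_{i_1},\dots,S_{i_k}$ are already accounted for by $P$. Together with the $\tau$-rigidity inherited from $T$, this verifies the AIR characterization of a support $\tau$-tilting pair and gives $T' \in \pstilt_{np}\Lambda$. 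For the inverse, given $M \in \pstilt_{np}\Lambda$ with $Me=0$ I would set $\psi(M) = M \oplus \bigoplus_j P_{i_j}$; the verification that $\psi(M)$ is $\tau$-tilting requires lifting $\tau$-rigidity from $\Lambda/\Lambda e \Lambda$ back to $\Lambda$, which in the Nakayama setting follows from the explicit description of $\tau$ on uniserial modules. The identities $\phi\circ\psi=\mathrm{id}$ and $\psi\circ\phi=\mathrm{id}$ then follow by inspection, since in each direction the summands added or removed are precisely the indecomposable projectives at the idempotent $e$.
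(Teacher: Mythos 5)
The paper does not prove this statement at all: it is recalled verbatim from Adachi \cite[Theorem~2.6]{Ada2016}, so there is no internal proof to compare yours with, and your attempt has to stand on its own. It does not. While the underlying map (strip off the projective direct summands) may well be the correct one, each of your three steps contains a concrete error. In step~(i), sincerity does \emph{not} force a summand with top $S_n$: over $\Lambda^2_2=KA_2$ the module $T=P_1\oplus S_1$ is $\tau$-tilting and sincere (the simple $S_2$ occurs in the socle of $P_1$), yet it has no summand isomorphic to $P_2=S_2$. The true statement in type $A_n$ is Proposition~\ref{2.3}: the summand that always occurs is $P_1$, the long projective at the source, and sincerity alone does not give it. For the cyclic case you offer no argument; ``a projective survives under every mutation sequence'' is exactly the nontrivial content, and mutation does not visibly preserve the property of having a projective summand.

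Step~(ii) rests on a false identity: $\Hom_\Lambda\big(P_{i_j},T'\big)\neq 0$ precisely when $S_{i_j}$ is a composition factor of $T'$, not when some summand of $T'$ has top $S_{i_j}$. For example, $T=P_1\oplus M\oplus S_1$ over $\Lambda^3_3$, where $M$ is the length-two module with top $S_1$, is $\tau$-tilting, and $\Hom(P_1,S_1)\neq 0$; the support complement of $T'=M\oplus S_1$ is $P_3$, not the removed summand $P_1$. So $(T',P)$ is in general not a support $\tau$-tilting pair. This sinks step~(iii) as well: over $KA_2$ one has $\phi(P_1\oplus S_1)=S_1$, whose support pair is $(S_1,P_2)$, hence $\psi(S_1)=S_1\oplus P_2$; but $\tau S_1=S_2=P_2$ and $\Hom(P_2,S_2)\neq 0$, so $\psi(S_1)$ is not even $\tau$-rigid, and certainly $\psi\circ\phi\neq\mathrm{id}$. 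The projectives that must be re-attached to $N\in\pstilt_{np}\Lambda$ are determined by a Bongartz-type maximality condition, not by the support of $N$, and proving that this completion exists, is $\tau$-tilting, and inverts the forgetful map is precisely the substance of Adachi's theorem that your sketch leaves untouched.
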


The following result is very useful.

\begin{Proposition}[{\cite[Proposition 2.32]{Ada2016}}]\label{2.3} Let $\Lambda$ be a Nakayama algebra of type~$A_n$. Then each $\tau$-tilting $\Lambda$-module has~$P_1$ as a direct summand.
\end{Proposition}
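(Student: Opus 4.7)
The plan is to show that $P_1$ behaves as a \emph{universal $\tau$-rigid complement}: the module $M\oplus P_1$ is $\tau$-rigid whenever $M$ is. Combined with the basic inequality $|X|\leq|\Lambda|=n$ for $\tau$-rigid $X$ established in~\cite{AIR}, which becomes an equality for $\tau$-tilting modules by Definition~\ref{2.1}(2), this will force $P_1$ to be a summand of every $\tau$-tilting $\Lambda$-module.

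The key technical input is the vanishing $\Hom_\Lambda(P_1,\tau N)=0$ for every indecomposable $N\in\mod\Lambda$. Using the identification $\Hom_\Lambda(P_1,X)=\Hom_\Lambda(e_1\Lambda,X)\cong Xe_1$, this is equivalent to $S_1$ not appearing as a composition factor of $\tau N$. When $N$ is projective the claim is trivial, since $\tau N=0$. When $N=P_i/\rad^k P_i$ is non-projective we have $k<\ell(P_i)$, and hence $i+k\leq n$; a minimal projective presentation then takes the shape $P_{i+k}\to P_i\to N\to 0$. Applying the Nakayama functor and taking the kernel gives the classical formula $\tau N\cong P_{i+1}/\rad^k P_{i+1}$ valid for Nakayama algebras of type $A_n$. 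The composition factors of this quotient are $S_{i+1},S_{i+2},\dots,S_{i+k}$, all of indices $\geq 2$, so $S_1$ is absent.

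Extending by additivity, $\Hom_\Lambda(P_1,\tau M)=0$ for every $M\in\mod\Lambda$. Together with $\tau P_1=0$ and the $\tau$-rigidity of $M$, a direct computation gives
\begin{gather*}
\Hom_\Lambda(M\oplus P_1,\tau(M\oplus P_1))=\Hom_\Lambda(M,\tau M)\oplus\Hom_\Lambda(P_1,\tau M)=0,
\end{gather*}
so $M\oplus P_1$ is $\tau$-rigid.

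Now suppose $M$ is a $\tau$-tilting $\Lambda$-module, so $|M|=n$. If $P_1\notin\add M$, then $M\oplus P_1$ would be $\tau$-rigid with $n+1$ pairwise non-isomorphic indecomposable summands, contradicting the bound from~\cite{AIR}. Hence $P_1$ is a direct summand of $M$. The principal obstacle in this plan is justifying the AR-translate formula $\tau(P_i/\rad^k P_i)\cong P_{i+1}/\rad^k P_{i+1}$ in the non-projective case; however, this is routine once the minimal projective presentation is written down and the Nakayama functor is applied, and is well documented for type $A_n$ (see, e.g.,~\cite{ASS2006}).
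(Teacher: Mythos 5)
The paper does not actually prove this statement; it imports it verbatim from Adachi~\cite[Proposition~2.32]{Ada2016}, so there is no in-text argument to compare against. Your proof is correct and self-contained, and it isolates the right special feature of the linearly oriented quiver: vertex $1$ is a source, so every indecomposable is a uniserial $P_i/\rad^k P_i$ and $\tau$ shifts its top from $S_i$ to $S_{i+1}$, whence $S_1$ never occurs as a composition factor of $\tau N$ and $\Hom_\Lambda(P_1,\tau N)\cong(\tau N)e_1=0$. Combined with the general principle that $P_1\oplus M$ is $\tau$-rigid whenever $M$ is $\tau$-rigid and $\Hom_\Lambda(P_1,\tau M)=0$ (since $\tau P_1=0$), and the bound $|X|\leq|\Lambda|$ for $\tau$-rigid $X$ from~\cite{AIR}, this forces $P_1\in\add M$ for every $\tau$-tilting $M$. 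Two points you leave implicit are worth recording but are not gaps: (i) you use the classification of indecomposables over a Nakayama algebra as the uniserial modules $P_i/\rad^k P_i$ (see~\cite[Theorem~V.3.5]{ASS2006}); and (ii) for the formula $\tau\big(P_i/\rad^k P_i\big)\cong P_{i+1}/\rad^k P_{i+1}$ to make sense one needs $\ell(P_{i+1})\geq k$, which follows from the Kupisch-series inequality $\ell(P_{i+1})\geq\ell(P_i)-1\geq k$ for non-projective $P_i/\rad^k P_i$. Your route is arguably more transparent than arguing via sincerity (the route suggested by the paper's remark that $\tau$-tilting modules are the sincere support $\tau$-tilting ones), since sincerity alone only yields that some quotient $P_1/\rad^k P_1$ is a summand, and ruling out the proper quotients requires extra work; your maximality argument avoids that entirely.
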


 The following recurrence relations are very useful to calculate the number of $\tau$-tilting modules over Nakayama algebras of type $A_n$.

\begin{Lemma}[{\cite[Corollary 2.34]{Ada2016}}]\label{2.4} Let $\Lambda$ be a Nakayama algebra of type $A_n$. Then
 \begin{gather*}|\tilt\Lambda|=\sum^{\ell(P_{1})}_{i=1}C_{i-1}\cdot |\tilt(\Lambda/\langle e_{\leqslant i}\rangle )|,\end{gather*}
 where $e_{ \leqslant i}:=e_1+e_2+\dots+e_i$. In particular, for $\Lambda=\Lambda_n^r$, we have
\begin{gather*}t_r(n)=\sum^r_{i=1}C_{i-1}\cdot t_r(n-i),\end{gather*}
where $t_r(n)$ is the number of $\tau$-tilting modules over~$\Lambda^r_n$.
\end{Lemma}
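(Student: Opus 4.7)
The plan is to establish a bijection
\[
\Phi \colon \tilt \Lambda \;\longrightarrow\; \bigsqcup_{i=1}^{\ell(P_1)} \tilt(KA_{i-1}) \times \tilt(\Lambda/\langle e_{\leqslant i}\rangle),
\]
which, since $|\tilt(KA_{i-1})| = C_{i-1}$ by the type-$\mathbb{A}$ formula recalled in the introduction, immediately delivers the general recurrence; the special case $\Lambda = \Lambda^r_n$ follows from $\Lambda^r_n/\langle e_{\leqslant i}\rangle \cong \Lambda^r_{n-i}$.

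By Proposition~\ref{2.3}, $P_1$ is a direct summand of every $T \in \tilt \Lambda$. Since $\Lambda$ is Nakayama of type $A_n$, the indecomposables with top $S_1$ form a chain $M[1,1] \subset M[1,2] \subset \cdots \subset M[1,\ell(P_1)] = P_1$, where $M[1,k]$ denotes the indecomposable with top $S_1$ and length $k$. I would define the invariant $i(T) \in \{1,\dots,\ell(P_1)\}$ by letting $k_1 < k_2 < \cdots < k_s = \ell(P_1)$ list the $k$'s for which $M[1,k]$ is a summand of $T$, and setting $i(T) := k_{s-1}+1$ if $s \geqslant 2$ and $i(T) := 1$ if $s = 1$. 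The map $\Phi$ then sends $T$ to $(U,V)$, where $U$ is the sum of summands of $T$ supported on $\{1,\dots,i-1\}$ (a $KA_{i-1}$-module, since the ``inside'' quotient $\Lambda/\langle e_{\geqslant i}\rangle$ is hereditary by the Kupisch inequalities combined with $i \leqslant \ell(P_1)$) and $V$ is the sum of summands supported on $\{i+1,\dots,n\}$; the inverse sends $(U,V)$ to $P_1 \oplus U \oplus V$.

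The argument reduces to three verifications: (i) no ``straddling'' non-$P_1$ summand exists, so each non-$P_1$ summand of $T$ lies entirely in $\{1,\dots,i-1\}$ or in $\{i+1,\dots,n\}$; (ii) $U$ is $\tau$-tilting over $KA_{i-1}$ and $V$ is $\tau$-tilting over $\Lambda/\langle e_{\leqslant i}\rangle$, which follows by counting (since $|U|+|V| = n-1$ forces $|U| = i-1$ and $|V| = n-i$); (iii) conversely $P_1 \oplus U \oplus V$ is $\tau$-tilting over $\Lambda$ with invariant $i$. All three rest on the explicit combinatorial description for type-$A$ Nakayama algebras: $\Hom(M[a_1,b_1],M[a_2,b_2]) \neq 0$ iff $a_2 \leqslant a_1 \leqslant b_2 \leqslant b_1$, and $\tau M[a,b] = M[a+1,b+1]$ on non-projective indecomposables.

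I expect the no-straddling claim (i) to be the main obstacle. If $M[a,b]$ with $a \leqslant i-1 < b$ is a non-$P_1$ summand of $T$, then for $i \geqslant 2$ the backbone $M[1,i-1]$ is also a summand and $\tau M[1,i-1] = M[2,i]$; the criterion gives $\Hom(M[a,b],M[2,i]) \neq 0$ whenever $a \geqslant 2$, contradicting $\tau$-rigidity, while the case $a = 1$ is excluded by the definition of $k_{s-1}$ (no $M[1,k]$ summand for $k_{s-1} < k < \ell(P_1)$); the case $i = 1$ is vacuous. The compatibility check for (iii) is easier: the $\tau$-shift sends $\{1,\dots,i-1\}$ into $\{2,\dots,i\}$ and $\{i+1,\dots,n\}$ into $\{i+2,\dots,n\}$, so no cross-Hom with a $\tau$-image arises between the $U$-summands and $V$-summands, while $P_1$ is projective-injective and hence compatible with every other summand.
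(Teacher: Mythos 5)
First, a point of reference: the paper does not prove this lemma at all --- it is imported verbatim from Adachi \cite[Corollary~2.34]{Ada2016} --- so there is no in-paper argument to compare against. Your reconstruction is, in substance, Adachi's own proof: single out $P_1$ via Proposition~\ref{2.3}, read off the index $i$ from the summands sharing top $S_1$ with $P_1$, and split the remaining summands into a hereditary $A_{i-1}$-part and a $\Lambda_{>i}$-part. The interval calculus you invoke ($\Hom(M[a_1,b_1],M[a_2,b_2])\neq 0$ iff $a_2\leqslant a_1\leqslant b_2\leqslant b_1$, and $\tau M[a,b]=M[a+1,b+1]$ off the projectives) is correct for right modules over $KA_n/I$, and the identification $\Lambda_{<i}\cong KA_{i-1}$ for $i\leqslant \ell(P_1)$ via the Kupisch inequalities is also right, so the scheme goes through and yields the stated recurrence.

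Two spots need tightening. First, your case analysis in (i) only rules out summands $M[a,b]$ with $a\leqslant i-1<b$, but the decomposition $T=P_1\oplus U\oplus V$ also requires excluding summands $M[i,b]$ with top exactly $S_i$; these are supported neither in $\{1,\dots,i-1\}$ nor in $\{i+1,\dots,n\}$ and are not covered by your inequality. The same computation disposes of them: for $i\geqslant 2$ one has $\Hom(M[i,b],\tau M[1,i-1])=\Hom(M[i,b],M[2,i])\neq 0$ because $2\leqslant i\leqslant i\leqslant b$ (and for $i=1$ the case is again vacuous). Second, in (iii) the $\tau$-rigidity of $P_1\oplus U\oplus V$ over $\Lambda$ does not follow solely from the $\tau$-rigidity of $U$ over $KA_{i-1}$ together with the ``no cross-Hom'' observation: a summand $M[a,i-1]$ of $U$ is projective over $KA_{i-1}$ (hence invisible to $\tau_{KA_{i-1}}$) but is in general non-projective over $\Lambda$, with $\tau_{\Lambda}M[a,i-1]=M[a+1,i]\neq 0$. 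You must check $\Hom(T,M[a+1,i])=0$ separately; it holds because a nonzero map $M[c,d]\to M[a+1,i]$ forces $a+1\leqslant c\leqslant i\leqslant d$, which fails for $P_1$ (where $c=1<a+1$), for summands of $U$ (where $d\leqslant i-1<i$), and for summands of $V$ (where $c\geqslant i+1>i$). With these two one-line checks added, the argument is complete.
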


See Table~\ref{t1} in Section~\ref{sect 5} for explicit values of $t_r(n)$.

\begin{Remark}\label{2.5}
If $r\ge n$ then $\Lambda^r_n$ is hereditary, and we have $t_r(n)=C_n.$ On the other hand, Lemma~\ref{2.4} yields the equation $t_n(n)=\sum\limits^n_{i=1}C_{i-1}\cdot t_n(n-i)$. Hence we recover the well-known combinatorial identity
\begin{gather*}C_n=\sum^n_{i=1}C_{i-1}\cdot C_{n-i}.\end{gather*}
\end{Remark}

\section[The number of $\tau$-tilting modules over Nakayama algebras]{The number of $\boldsymbol{\tau}$-tilting modules over Nakayama algebras}\label{sect 3}

In this section, $\Lambda$ will be any Nakayama algebra of type $A_n$. As usual, we use the notation~$t_r(n)$ for the number of $\tau$-tiling modules over $\Lambda_n^r=KA_n/ {\rm rad}^r$ and $\widetilde{t}_r(n)$ for the number of $\tau$-tiling modules over $\widetilde{\Lambda}_n^r=K\widetilde{A}_n/ {\rm rad}^r$. We will show that the functions~$t $ and~$ \tilde t$ satisfy the same recurrence relation.

We denote by $W_i$ $(i=1,2,\dots,n)$ the set of support $\tau$-tilting $\Lambda$-modules which have the simples $S_1,S_2,\dots, S_{i-1}$ as composition factor but not $S_i$. We also write
$\Lambda_{>i}:=\Lambda/\langle e_{\leqslant i}\rangle$ and $\Lambda_{<i}:=\Lambda/\langle e_{\geqslant i}\rangle$ where $e_{ \leqslant i}:=e_1+e_2+\cdots+e_i$ and $e_{ \geqslant i}:=e_i+e_{i+1}+\cdots+e_n$.
\begin{Lemma}\label{3.1}$|W_i|=|\tilt\Lambda_{<i}|\cdot|\stilt\Lambda_{> i}|$.
\end{Lemma}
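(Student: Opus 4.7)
The plan is to reduce the statement to a standard product decomposition of support $\tau$-tilting modules. Since $\Lambda$ is linearly oriented of type $A_n$, any path from a vertex $j \le i-1$ to a vertex $k \ge i+1$ must pass through the vertex $i$. Thus, in the two-sided ideal $\langle e_i\rangle$ every path connecting the two halves of the quiver is killed, and we obtain an algebra isomorphism
\begin{gather*}
\Lambda/\langle e_i\rangle \;\cong\; \Lambda_{<i}\times\Lambda_{>i}.
\end{gather*}
This is the first step, and it is essentially a direct check using the definition of $\Lambda_n^r=KA_n/\rad^r$.

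Next, I would identify $W_i$ with a distinguished subset of $\stilt(\Lambda/\langle e_i\rangle)$. By definition, a module $M\in W_i$ has no composition factor $S_i$, so $M$ is a $\Lambda/\langle e_i\rangle$-module. Moreover, the condition of being support $\tau$-tilting for $\Lambda$ (being a $\tau$-tilting $\Lambda/\Lambda f\Lambda$-module for some idempotent $f$) automatically factors through $\Lambda/\langle e_i\rangle$ once $e_i$ lies in the support-removing idempotent; this shows $M\in\stilt(\Lambda/\langle e_i\rangle)$, and conversely every element of $\stilt(\Lambda/\langle e_i\rangle)$ is support $\tau$-tilting over $\Lambda$.

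For the third step, I would use the well-known fact that support $\tau$-tilting modules over a product of algebras $A\times B$ correspond bijectively to pairs $(M_A,M_B)$ of support $\tau$-tilting modules over the factors; this is immediate from the definition, since $\tau_{A\times B}=\tau_A\oplus\tau_B$ and idempotents of $A\times B$ are pairs of idempotents. Applied to the decomposition of Step~1, each $M\in W_i$ decomposes uniquely as $M=M_{<i}\oplus M_{>i}$ with $M_{<i}\in\stilt\Lambda_{<i}$ and $M_{>i}\in\stilt\Lambda_{>i}$. The requirement that all of $S_1,\dots,S_{i-1}$ appear as composition factors of $M$ translates exactly into $M_{<i}$ being sincere as a $\Lambda_{<i}$-module. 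Invoking the characterization (stated just before Theorem~\ref{2.2}) that sincere support $\tau$-tilting modules are precisely the $\tau$-tilting modules, we get $M_{<i}\in\tilt\Lambda_{<i}$, with no restriction on $M_{>i}$.

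Putting these together yields a bijection
\begin{gather*}
W_i \;\longleftrightarrow\; \tilt\Lambda_{<i}\;\times\;\stilt\Lambda_{>i},
\end{gather*}
and the stated cardinality identity follows. The main potential pitfall is the first step: I must be careful that the algebra isomorphism $\Lambda/\langle e_i\rangle\cong \Lambda_{<i}\times\Lambda_{>i}$ genuinely holds, i.e.\ that no relation induced by $\rad^r$ creates an unexpected identification between the two sides; but since all paths crossing $i$ are killed anyway in the quotient, this is clean for linearly oriented $A_n$.
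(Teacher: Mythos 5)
Your argument is correct and follows essentially the same route as the paper: the decomposition $\Lambda/\langle e_i\rangle\cong\Lambda_{<i}\times\Lambda_{>i}$ (valid because the quiver of $\Lambda$ is a tree, so every path between the two sides passes through $i$), the componentwise description of support $\tau$-tilting modules over a product, and the identification of the sincere ones over $\Lambda_{<i}$ with the $\tau$-tilting ones via \cite[Proposition~2.2(a)]{AIR}. The paper's proof is just a terser version of exactly these steps, so no further comment is needed.
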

\begin{proof} Since the quiver of $\Lambda$ is tree, we have $\Lambda/\langle e_i\rangle\cong \Lambda_{<i}\times \Lambda_{>i}$. Thus there is a bijection
\begin{gather*}\tilt \Lambda_{<i}\times \stilt\Lambda_{> i}\longrightarrow W_i\end{gather*}
given by $(N_1,N_2)\mapsto N_1\oplus N_2$ where $N_1$ is a $\tau$-tilting $\Lambda_{<i}$-module and $N_2$ is a support $\tau$-tilting $\Lambda_{>i}$-module. Hence $|W_i|=|\tilt\Lambda_{<i}|\cdot|\stilt\Lambda_{>i}|$.
\end{proof}

\begin{Proposition}\label{3.2} Let $\Lambda$ be a Nakayama algebra of type $A_n$. We have
\begin{enumerate}\itemsep=0pt
\item[$(1)$] $|\pstilt\Lambda|=\sum\limits^{n}_{i=1}|\tilt\Lambda_{< i}|\cdot|\stilt\Lambda_{>i}|$,
\item[$(2)$] $|\stilt\Lambda|=\sum\limits^{n}_{i=1}|\tilt\Lambda_{<i} |\cdot|\stilt\Lambda_{>i}|+\sum\limits^{\ell(P_{1})}_{i=1}C_{i-1}\cdot |\tilt\Lambda_{>i}|$,
\item[$(3)$] $|\pstilt\Lambda|=\sum\limits^{n}_{i=1}|\stilt\Lambda_{<i}|\cdot|\tilt\Lambda_{>i}|$,
\item[$(4)$] $|\stilt\Lambda|=\sum\limits^{n}_{i=1}|\stilt\Lambda_{<i}| \cdot|\tilt\Lambda_{>i}|+\sum\limits^{\ell(P_{1})}_{i=1}C_{i-1}\cdot |\tilt\Lambda_{>i}|$.
\end{enumerate}
\end{Proposition}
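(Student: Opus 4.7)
The plan is to handle (1) directly from Lemma~\ref{3.1}, derive (2) from (1) combined with Lemma~\ref{2.4}, and then repeat the process with a symmetric (right-hand) version of Lemma~\ref{3.1} to obtain (3) and (4). The core observation is that, by \cite[Proposition~2.2(a)]{AIR}, a support $\tau$-tilting module is $\tau$-tilting precisely when it is sincere, so every proper support $\tau$-tilting module is missing at least one simple as a composition factor, and this missing simple is exactly what the sets $W_i$ are designed to track.

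For (1), I would send each $M\in\pstilt\Lambda$ to $i(M):=\min\{j\mid S_j \text{ is not a composition factor of }M\}$. By minimality, $S_1,\ldots,S_{i(M)-1}$ all appear as composition factors while $S_{i(M)}$ does not, so $M\in W_{i(M)}$. Conversely, any module in $W_i$ is non-sincere (it omits $S_i$), hence proper support $\tau$-tilting. This gives $\pstilt\Lambda=\bigsqcup_{i=1}^n W_i$, and summing Lemma~\ref{3.1} proves (1). For (2), I would use $|\stilt\Lambda|=|\tilt\Lambda|+|\pstilt\Lambda|$, insert (1) for the first term, and insert Lemma~\ref{2.4} for $|\tilt\Lambda|$, using $\Lambda/\langle e_{\leqslant i}\rangle=\Lambda_{>i}$.

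For (3) and (4), I would mirror the argument. Define $V_i$ as the set of support $\tau$-tilting $\Lambda$-modules that have $S_{i+1},\ldots,S_n$ as composition factors but not $S_i$. The same decomposition $\Lambda/\langle e_i\rangle\cong\Lambda_{<i}\times\Lambda_{>i}$ used in the proof of Lemma~\ref{3.1} yields a bijection $\stilt\Lambda_{<i}\times\tilt\Lambda_{>i}\longrightarrow V_i$, where now it is the right-hand factor on $\Lambda_{>i}$ that must be sincere (hence $\tau$-tilting) and the left-hand factor on $\Lambda_{<i}$ that is an arbitrary support $\tau$-tilting module. Sending a proper support $\tau$-tilting $M$ to $i'(M):=\max\{j\mid S_j\text{ is not a composition factor of }M\}$ then produces a disjoint decomposition $\pstilt\Lambda=\bigsqcup_{i=1}^n V_i$, giving (3); and (4) follows from (3) exactly as (2) followed from (1).

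The only subtle point, and the one that has to be checked carefully, is the symmetric analogue of Lemma~\ref{3.1}: the two factors play asymmetric roles (one is required to be sincere and the other is not), so when swapping sides I must confirm that the factor carrying $S_{i+1},\ldots,S_n$ as composition factors is forced to be sincere as a $\Lambda_{>i}$-module. This is automatic from the definition of $V_i$, and once it is noted, the proof of Lemma~\ref{3.1} transcribes verbatim with the indices reversed. Everything else is elementary bookkeeping.
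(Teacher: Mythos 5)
Your proposal is correct and follows essentially the same route as the paper: decompose $\pstilt\Lambda$ as the disjoint union of the sets $W_i$ (indexed by the minimal missing simple) and apply Lemma~\ref{3.1}, then add $|\tilt\Lambda|$ via Lemma~\ref{2.4} for (2), and mirror the construction with the maximal missing simple for (3) and (4); the paper merely leaves these details (disjointness and the symmetric bijection) implicit. The only caveat is notational: the symbol $V_\ell$ is already used in Corollary~\ref{3.4} for a different set, so you should rename your sets in part (3).
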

\begin{proof}(1)
Since $\pstilt\Lambda=\bigcup\limits^n_{i=1}W_i$, we have
\begin{gather*}|\pstilt\Lambda|=\sum\limits^{n}_{i=1}|W_i| =\sum\limits^{n}_{i=1}|\tilt\Lambda_{<i}|\cdot|\stilt\Lambda_{>i}|
\end{gather*} by Lemma~\ref{3.1}.

(2) Since $|\stilt\Lambda|=|\pstilt\Lambda|+|\tilt\Lambda|$, the statement follows from Lemma \ref{2.4}.

(3) is similar to (1) and
(4) follows from (3).
\end{proof}

We give an example of Proposition~\ref{3.2}.
\begin{Example}\label{3.3} Let $\Lambda$ be an algebra is given by the quiver $\xymatrix@C10pt{1\ar[r]^\alpha&2\ar[r]^\beta&3\ar[r]&4}$  with the relation $\alpha\beta=0$.

 \begin{table}[h!]\small\centering
\begin{tabular}{c|c|c|c|c|c|c}
 \hline
&$\Lambda_{<i}$ & $\Lambda_{>i}$& $|\tilt\Lambda_{<i}|$&$|\stilt\Lambda_{>i}|$&$|\stilt\Lambda_{<i}|$&$|\tilt\Lambda_{>i}|$\\
 \hline
$i=1$&0 & $\xymatrix@C10pt{2\ar[r]^\beta&3\ar[r]&4}$&1&14&1&5\\
$i=2$& 1& $\xymatrix@C10pt{3\ar[r]&4}$ & 1 &5&2&2 \\
$i=3$& $\xymatrix@C10pt{1\ar[r]^\alpha&2}$ &4 &2 &2&5&1\\
$i=4$& $\xymatrix@C10pt{1\ar[r]^\alpha&2\ar[r]^\beta&3}$, $\alpha\beta=0$ & 0& 3 & 1&12&1\\
 \hline \end{tabular}
\end{table}

By Proposition \ref{3.2}(1), $|\pstilt\Lambda|=1\cdot 14+1\cdot 5+2\cdot 2+3\cdot 1=26$.
 Note that $\ell(P_1)=2$, and thus Lemma~\ref{2.4} implies $|\tilt\Lambda|=1\cdot 5+1\cdot 2=7$. Hence, we have $|\stilt\Lambda|=26+7=33$ by Proposition \ref{3.2}(2). Moreover, we can also use part (3) of Proposition~\ref{3.2} and compute $|\pstilt\Lambda|=1\cdot 5+2\cdot2+5\cdot 1+12\cdot1= 26$.
\end{Example}

\begin{Corollary}\label{3.4} Let $V_\ell~(\ell=1,2,\dots,n)$ be the set of all support $\tau$-tilting $\Lambda$-modules which have $S_\ell,S_{\ell-1},\dots,S_1$ as composition factor. Then we have
\begin{gather*}|V_\ell |=\sum\limits^{n}_{i=\ell+1}|\tilt\Lambda_{<i}|\cdot|\stilt\Lambda_{>i}|+|\tilt\Lambda|.\end{gather*}
\end{Corollary}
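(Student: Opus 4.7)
The plan is to decompose $V_\ell$ as a disjoint union of previously studied sets and then apply Lemma~\ref{3.1} directly. The key observation is that a support $\tau$-tilting module $M \in V_\ell$ either is sincere, in which case it is a $\tau$-tilting module, or it fails to contain some simple $S_j$ as a composition factor. In the latter case, let $i$ be the smallest index such that $S_i$ is not a composition factor of $M$. Since $S_1,\dots,S_\ell$ all appear in $M$ by hypothesis, we necessarily have $i > \ell$; moreover, by the minimality of $i$, the simples $S_1,\dots,S_{i-1}$ all appear in $M$, which is exactly the defining condition for $M \in W_i$.

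Conversely, every $M \in \tilt\Lambda$ is sincere (by~\cite[Proposition~2.2(a)]{AIR}) and therefore lies in $V_\ell$, and every $M \in W_i$ with $i \geqslant \ell+1$ has $S_1,\dots,S_\ell$ as composition factors and so lies in $V_\ell$. Finally, the sets $W_{\ell+1},\dots,W_n$ are pairwise disjoint because they are distinguished by the smallest missing simple composition factor. This yields the disjoint decomposition
\begin{gather*}
V_\ell \;=\; \tilt\Lambda \;\sqcup\; \bigsqcup_{i=\ell+1}^{n} W_i.
\end{gather*}

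Taking cardinalities and applying Lemma~\ref{3.1} to each $|W_i|$ immediately gives
\begin{gather*}
|V_\ell| \;=\; |\tilt\Lambda| \;+\; \sum_{i=\ell+1}^{n} |\tilt\Lambda_{<i}|\cdot|\stilt\Lambda_{>i}|,
\end{gather*}
which is the claimed identity. There is no substantial obstacle here: the only point requiring a moment's care is verifying that a proper support $\tau$-tilting module in $V_\ell$ does land in some $W_i$ with $i>\ell$, and this follows by selecting the smallest index of a missing simple and using the hypothesis that $S_1,\dots,S_\ell$ are all present.
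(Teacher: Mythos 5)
Your proposal is correct and follows essentially the same route as the paper: the authors also observe that $V_\ell=\bigl(\bigcup_{i=\ell+1}^{n}W_i\bigr)\cup\tilt\Lambda$ and then apply Lemma~\ref{3.1}. You merely spell out the disjointness and the ``smallest missing simple'' argument, which the paper leaves implicit.
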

\begin{proof}
This result follows from $V_\ell=\Big(\bigcup\limits^n_{i=\ell+1}W_i\Big)\bigcup\tilt\Lambda$.
\end{proof}

From now, we will study the number of $\tau$-tilting $\widetilde{\Lambda}^r_n$-modules.
The following result is very useful to calculate the number of proper support $\tau$-tilting $\widetilde{\Lambda}^r_n$-modules.
\begin{Proposition}\label{3.5}
\[
\big|\pstilt\widetilde{\Lambda}^r_n\big|=\sum\limits^{n-1}_{i=1}i\cdot t_r(i-1)\cdot s_r({n-i-1})+n\cdot t_r(n-1).
\]
\end{Proposition}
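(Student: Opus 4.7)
The plan is to partition $\pstilt\widetilde{\Lambda}^r_n$ according to the smallest missing vertex of each module, reduce the count to the linear case by cutting the cyclic quiver at that vertex, and then invoke the counting formulas from Section~\ref{sect 3}.

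For each $M\in\pstilt\widetilde{\Lambda}^r_n$, set $I(M):=\{j\in\{1,\dots,n\} : S_j\text{ is not a composition factor of }M\}$. Since $M$ is not sincere, $I(M)\neq\emptyset$, so I would define $j(M):=\min I(M)$, producing a disjoint decomposition
\[
\pstilt\widetilde{\Lambda}^r_n=\bigsqcup_{j=1}^n N_j,\qquad N_j:=\{M\in\pstilt\widetilde{\Lambda}^r_n\mid j(M)=j\}.
\]
Cutting $\widetilde{A}_n$ at the vertex $j$ yields the linear quiver on vertices $j+1,j+2,\dots,n,1,2,\dots,j-1$, giving an isomorphism $\widetilde{\Lambda}^r_n/\langle e_j\rangle\cong\Lambda^r_{n-1}$. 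Under this identification, elements of $N_j$ become support $\tau$-tilting $\Lambda^r_{n-1}$-modules; for $j\geq 2$, the requirement $j(M)=j$ is precisely that the \emph{last} $j-1$ simples of $\Lambda^r_{n-1}$ (in its linear order) appear as composition factors of $M$.

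The case $j=1$ is unconstrained, so $|N_1|=s_r(n-1)$. For $j\geq 2$ I would apply the mirrored form of Corollary~\ref{3.4} obtained by partitioning via the \emph{largest} missing vertex (the companion derived from Proposition~\ref{3.2}(3) in the same way that Corollary~\ref{3.4} is derived from Proposition~\ref{3.2}(1)). This gives
\[
|N_j|=t_r(n-1)+\sum_{p=1}^{n-j}s_r(p-1)\,t_r(n-1-p).
\]

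Summing the $|N_j|$ over $j$, swapping the order of summation via $i=n-p$, and then expanding $s_r(n-1)$ by combining Proposition~\ref{3.2}(2) applied to $\Lambda^r_{n-1}$ with Lemma~\ref{2.4} --- which jointly yield the auxiliary identity
\[
s_r(n-1)=\sum_{i=1}^{n-1}t_r(i-1)\,s_r(n-1-i)+t_r(n-1)
\]
--- and finally collecting like terms produces exactly the claimed formula. The main obstacle is purely algebraic bookkeeping: one must explicitly state the mirrored version of Corollary~\ref{3.4} (since that corollary fixes the \emph{first} $\ell$ simples rather than the last) and then carefully reorganize the resulting double sum into the target form $\sum_{i=1}^{n-1}i\cdot t_r(i-1)\,s_r(n-i-1)+n\cdot t_r(n-1)$.
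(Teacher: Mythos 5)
Your proof is correct and is essentially the paper's argument run in mirror image: the paper partitions $\pstilt\widetilde{\Lambda}^r_n$ by the first missing simple counting down from $S_n$ (so that cutting the cycle lands directly on Corollary~\ref{3.4}), while you partition by the smallest missing vertex, which forces you to state the mirrored version of Corollary~\ref{3.4} (via Proposition~\ref{3.2}(3)) and to do an extra reindexing plus the identity $s_r(n-1)=\sum_{i=1}^{n-1}t_r(i-1)s_r(n-1-i)+t_r(n-1)$ to reach the stated form. Both the mirrored corollary and the final bookkeeping check out, so the argument is sound.
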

\begin{proof}
For $1\leqslant \ell\leqslant n-1$, we define
 \begin{gather*}K_{n,\ell}=\big\{N\in \stilt\widetilde{\Lambda}^r_n \,|\, N~\text{has $S_n,S_{n-1},\dots, S_{n-\ell+1}$ as composition factor but not $S_{n-\ell}$}\big\}\end{gather*}
 and
 \begin{gather*}K_{n,0}=\big\{N\in \stilt\widetilde{\Lambda}^r_n \,|\, N~\text{does not contain $S_n$ as composition factor}\big\}.\end{gather*}
 Note that $\widetilde{\Lambda}^r_n/\langle e_{n-\ell}\rangle$ is the quotient of path algebra of the quiver
 \begin{gather*}\xymatrix{n-\ell+1\ar[r]&\cdots\ar [r] &n \ar[r]&1\ar[r]& 2\ar[r]&\cdots\ar[r]& n-\ell-1}\end{gather*}
 by the 2-sided ideal generated by paths of length $r$. Therefore
 $\widetilde{\Lambda}^r_n/\langle e_{n-\ell}\rangle\cong \Lambda^r_{n-1}$.
By Corollary~\ref{3.4}, we have
 \begin{gather*}|K_{n,\ell}|=|V_{\ell}|=\sum\limits^{n-1}_{i=\ell+1}t_r({i-1})\cdot s_r({n-i -1})+t_r({n-1}).\end{gather*}
In particular, $|K_{n,0}|=s_r({n-1})=\sum\limits^{n-1}_{i=1} t_r({i-1})\cdot s_r({n-i -1})+t_r({n-1}).$
Hence,
\begin{align*}
\big|\pstilt\widetilde{\Lambda}^r_n\big| &= \sum\limits^{n-1}_{\ell=0}|K_{n,\ell}|
=\sum\limits^{n-1}_{\ell=0}(\sum\limits^{n-1}_{i=\ell+1} t_r(i-1)\cdot s_r(n-i-1)+t_r(n-1))\\
&=\sum\limits^{n-1}_{i=1}i\cdot t_r(i-1)\cdot s_r(n-i-1)+n\cdot t_r(n-1).\tag*{\qed}
\end{align*}\renewcommand{\qed}{}\end{proof}

 We set $X_n=\varnothing $, for $n<0$, and for all $n\ge 0$, we define
 \begin{gather*}X_{n}=\big\{N\in \stilt\Lambda^r_n\,|\, N \ \text{does not contain $P_{n-r+1},P_{n-r},\dots,P_1$ as direct summands} \big\}\end{gather*}
and \begin{gather*}Y_{n,\ell}=\{N\in X_n \,|\, N \ \text{contains $S_{\ell},S_{\ell-1},\dots,S_1$ as composition factor} \}, \quad \ell=1,2,\dots,n.\end{gather*}
 In particular, $X_{n}= \stilt\zL^r_n$ when $n+1\leqslant r$, and $Y_{n,0}=X_n$.
\begin{Lemma}\label{3.6} With the above notions, we have
\begin{enumerate}\itemsep=0pt
\item[$(1)$] $|X_{n}|=t_r(n+1)$,
\item[$(2)$] $|Y_{n,\ell}|=
\begin{cases}
\sum\limits^{r}_{i=\ell+1}C_{i-1}\cdot t_r({n-i+1})& \text{if $\ell\leqslant r-1$},\\
0&\text{if $\ell\geqslant r$}.
\end{cases}$
\end{enumerate}
\end{Lemma}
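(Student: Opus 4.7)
I would prove both parts by induction on $n$, recovering part~(1) from part~(2) at $\ell=0$ via Lemma~\ref{2.4}. The key structural observation is that for non-sincere $N \in X_n$, if $i=i(N)$ denotes the smallest index with $S_i$ not a composition factor of $N$, then no indecomposable summand of $N$ can straddle vertex $i$ (since indecomposable $\Lambda^r_n$-modules have their composition factors forming a consecutive block of simples). Hence $N$ decomposes as $N = N_1 \oplus N_2$ with $N_1 \in \stilt \Lambda^r_{i-1}$ supported on vertices $\{1,\dots,i-1\}$ and $N_2 \in \stilt \Lambda^r_{n-i}$ supported on $\{i+1,\dots,n\}$; by minimality of $i$, the factor $N_1$ is sincere and hence $\tau$-tilting over $\Lambda^r_{i-1}$.

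This immediately forces $i \leq r$. Indeed, by Proposition~\ref{2.3}, $N_1$ contains the projective $P_1$ of $\Lambda^r_{i-1}$ as summand; if $i-1 \geq r$, this projective would have Loewy length~$r$ and coincide, as a $\Lambda^r_n$-module, with $P_1$ of $\Lambda^r_n$, which is excluded from $X_n$ (since $1 \leq n-r+1$ in this regime), contradicting $N \in X_n$. In particular, $Y_{n,\ell}$ is empty when $\ell \geq r$. For $i \leq r$, the algebra $\Lambda^r_{i-1}=KA_{i-1}$ is hereditary, so $|\tilt \Lambda^r_{i-1}| = C_{i-1}$, and the $X_n$-constraint on $N$ descends to $N_2 \in X_{n-i}$ because the full-length (Loewy length $r$) projectives of $\Lambda^r_n$ supported on $\{i+1,\dots,n\}$ correspond, after relabeling, to the full-length projectives of $\Lambda^r_{n-i}$. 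Applying the inductive hypothesis to part~(1) gives $|X_{n-i}| = t_r(n-i+1)$; summing over $i$ yields part~(2), and part~(1) then follows at $\ell=0$ by Lemma~\ref{2.4}.

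The principal subtlety is the hereditary regime $r \geq n+1$, where $X_n = \stilt \Lambda^r_n$ admits sincere modules that do not decompose as above. These sincere modules are precisely the $\tau$-tilting $\Lambda^r_n$-modules, numbering $t_r(n)=C_n$ by Remark~\ref{2.5}. Their count matches the ``sentinel'' term $i=n+1$ of the formula, namely $C_n \cdot t_r(0) = C_n$, so the formula $\sum_{i=\ell+1}^r C_{i-1}\,t_r(n-i+1)$ uniformly captures both the non-sincere $N$ (via $i \in \{\ell+1,\dots,\min(r,n)\}$) and, when $r \geq n+1$, the sincere $N$ (via $i=n+1$).
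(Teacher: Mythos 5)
Your proposal is correct and follows essentially the same route as the paper: both stratify $X_n$ (equivalently $Y_{n,\ell}$) by the smallest missing simple $S_i$, split $N=N_1\oplus N_2$ over $\Lambda_{<i}\times\Lambda_{>i}$, use Proposition~\ref{2.3} to rule out $i>r$, count the hereditary factor by $C_{i-1}$, identify the remaining constraint with the definition of $X_{n-i}$, and close the induction with Lemma~\ref{2.4}. The only (cosmetic) difference is the order: the paper establishes the recursion for $|X_n|$ first and reads off $|Y_{n,\ell}|$ as a partial sum of the same stratification, whereas you prove the partial-sum formula first and specialize to $\ell=0$.
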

\begin{proof}(1) Write $\Lambda=\zL^r_n$, we have
$\Lambda/\langle e_i\rangle\cong \Lambda_{< i}\times \Lambda_{> i}$ for a given $i$.
Let $Z_1$ be the set of all support $\tau$-tilting $\Lambda_{<i}$-modules which have $S_1,S_2,\dots,S_{i-1}$ as composition factor (they are exactly $\tau$-tilting $\Lambda_{< i}$-modules)
and they don't have $P_1,P_2,\dots,P_{n-r+1}$ as direct summands.

If $i\leqslant r$, then $Z_1=\tilt\Lambda_{< i}$.

If $i\geqslant r+1$, then all $\tau$-tilting $\Lambda_{<i}$-modules have $P_1$ as direct summand by Proposition~\ref{2.3}, and hence $Z_1=\varnothing$.

Let $Z_2$ be the set of all support $\tau$-tilting $\Lambda_{>i}$-modules who do not have $P_1,P_2,\dots,P_{n-r+1}$ as direct summands. Then $Z_2$ consists of exactly all support $\tau$-tilting $\Lambda_{> i}$\,-modules which do not contain
$P_{i+1},P_{i+2},\dots,P_{n-r+1}$ as direct summands.
Denoted by $X_{n,i}\subseteq X_n$ $(i=1,2,\dots,n)$ the subset of all support $\tau$-tilting modules having $S_1,S_2,\dots, S_{i-1}$ as composition factor but not~$S_i$ and by $X_{n,n+1}\subseteq X_n$ the subset of all support $\tau$-tilting modules having $S_1,S_2,\dots, S_{n}$ as composition factor (hence, they are exactly $\tau$-tilting). We have $|Z_2|=|X_{n-i}|$, since $\Lambda_{> i}\cong \zL^r_{n-i}$.
There is a bijection between $Z_1\times Z_2$ and $X_{n,i}$ given
by $(N_1, N_2)\to N_1\oplus N_2$ where $N_1\in Z_1$ and $N_2\in Z_2$.
Therefore, $|X_{n,i}|=|Z_1|\cdot |Z_2|$.
Thus, we have
\begin{gather*}|X_{n,i}|=
\begin{cases}
|\tilt\Lambda_{< i}|\cdot |X_{n-i}|& \text{if} \ i\leqslant r,\\
0&\text{if} \ i\geqslant r+1.
\end{cases}\end{gather*}
 If $r\leqslant n$, we have
\begin{gather*}|X_n|=\sum^{n+1}_{i=1}|X_{n,i}|=\sum^{r}_{i=1}|\tilt\Lambda_{< i}|\cdot |X_{n-i}| =\sum^{r}_{i=1}C_{i-1}\cdot |X_{n-i}|.\end{gather*}
Note that, if $r\ge n+1$ then $X_n= \stilt\zL^r_n$ and hence
$|X_n|=C_{n+1}$ since $\Lambda^r_n$ is hereditary, we get $|X_n|=t_r(n+1)$.
On the other hand, Lemma \ref{2.4} says that $t_r(n)$ and $|X_n|$ satisfy the same recursive formula. Thus
we have $|X_n|=t_r(n+1)$ for all $n$, by induction.

(2) follows from (1) and the fact $Y_{n,\ell}=\bigcup\limits^{n+1}_{i=\ell+1}X_{n,i}$.
\end{proof}

As a result of Lemma \ref{3.6}, we have the following relationship between $\tilde t_r(n)$ and $t_r(n)$.

\begin{Proposition} \label{3.7}We have
\begin{gather*}\tilde t_r(n)=\sum^r_{i=1}i\cdot C_{i-1}\cdot t_r({n-i}).\end{gather*}
\end{Proposition}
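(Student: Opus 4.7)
The plan is to construct a bijection between $\tilt\widetilde{\Lambda}^r_n$ and $4$-tuples $(i,p,T_A,T_B)$ with $1\le p\le i\le r$, $T_A\in\tilt\Lambda^r_{i-1}$, and $T_B\in\tilt\Lambda^r_{n-i}$; the formula will then follow at once, since $i\le r$ implies that $\Lambda^r_{i-1}$ is hereditary of type $A_{i-1}$, whence $|\tilt\Lambda^r_{i-1}|=C_{i-1}$, and $|\tilt\Lambda^r_{n-i}|=t_r(n-i)$ by definition, so summing gives $\sum_{i=1}^{r} i\cdot C_{i-1}\cdot t_r(n-i)$.

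First I would, for each $T\in\tilt\widetilde{\Lambda}^r_n$, pick out a canonical indecomposable summand $M=M_T$ of $T$ that contains $S_n$ as a composition factor; such an $M$ exists because $T$ is sincere. With an appropriate extremal rule for choosing $M$ (for instance: among summands of $T$ containing $S_n$, take the one in which $S_n$ sits furthest from the top, breaking ties by shortest length), the length $i=\ell(M)\in\{1,\dots,r\}$ and the position $p\in\{1,\dots,i\}$ of $S_n$ within $M$ become well-defined invariants of $T$. Then I would cut the cyclic quiver $\widetilde{A}_n$ at the top vertex of $M$ (namely $n-p+1\pmod n$): the $i-1$ remaining vertices of $M$'s support arc form a linear sub-quiver yielding a hereditary sub-algebra isomorphic to $\Lambda^r_{i-1}$, while the complementary arc of $n-i$ vertices yields a sub-algebra isomorphic to $\Lambda^r_{n-i}$. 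Next I would verify that the summands of $T\ominus M$ split according to where their tops lie, producing a $\tau$-tilting module $T_A$ over $\Lambda^r_{i-1}$ from the summands with tops inside $M$'s arc (other than $M$ itself) and a $\tau$-tilting module $T_B$ over $\Lambda^r_{n-i}$ from the summands with tops in the complementary arc; the inverse map reassembles these pieces together with $M$ to recover $T$.

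The hard part will be verifying that this assignment is genuinely bijective. Two properties are crucial: first, once $M$ is pinned down by the extremal rule, no other summand of $T$ can wrap around the cycle and straddle the boundary between $M$'s arc and its complement --- this should follow from $\tau$-rigidity of $T$ together with the canonical choice of~$M$; and second, the pieces $T_A$ and $T_B$ must be independently $\tau$-rigid on their respective linear sub-algebras, while conversely any admissible pair $(T_A,T_B)$ together with any $(i,p)$ in range must lift to a sincere $\tau$-rigid module over $\widetilde{\Lambda}^r_n$. Both rely on careful calculations of $\Hom_{\widetilde{\Lambda}^r_n}(X,\tau Y)$ for Nakayama indecomposables, exploiting the cyclic symmetry of the quiver, and may well draw on Lemma~\ref{3.6} to relate the count on the $\Lambda^r_{n-i}$ side to the desired product $C_{i-1}\cdot t_r(n-i)$.
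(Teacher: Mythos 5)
There is a genuine gap here: the two properties you single out as ``the hard part'' are not merely hard, they are false for the construction you describe, so the bijection breaks down. Take $r=2$, $n=3$, so $\widetilde{\Lambda}^2_3$ is the cyclic quiver $1\to 2\to 3\to 1$ with $\rad^2=0$; its indecomposables are the simples and the projectives $P_a$ of length $2$. The module $T=P_1\oplus P_3\oplus S_1$ is $\tau$-tilting (it is sincere with $3$ summands, $\tau P_1=\tau P_3=0$, $\tau S_1=S_2$, and no summand has top $S_2$). The only summand containing $S_3$ is $M=P_3$, so your rule forces $i=\ell(M)=2$, $p=1$, cuts at the top vertex $3$, and puts $T_A$ on the arc $\{1\}$ and $T_B$ on the complementary arc $\{2\}$. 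But the two remaining summands $P_1$ and $S_1$ both have top at vertex $1$, so the ``split by tops'' sends both to the $T_A$ side and nothing to the $T_B$ side, whereas you need $|T_A|=i-1=1$ and $|T_B|=n-i=1$. The $\tau$-tilting module $T=P_1\oplus P_2\oplus S_2$ breaks the recipe differently: there $M=P_2$ with top vertex $2$, and the summand $S_2$ sits exactly on the deleted top vertex, so it belongs to neither piece; moreover $P_1$ genuinely straddles the boundary of $M$'s arc. So $\tau$-rigidity does not prevent straddling, and no extremal choice of $M$ rescues these examples since $M$ is unique in both.

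This is precisely the difficulty the paper's proof is designed to avoid. Instead of decomposing $\tau$-tilting modules on the cyclic algebra directly, it invokes Adachi's bijection $\tilt\widetilde{\Lambda}^r_n\leftrightarrow\pstilt_{np}\widetilde{\Lambda}^r_n$ (Theorem~\ref{2.2}); a proper support $\tau$-tilting module misses some simple $S_{n-\ell}$ and is therefore a module over $\widetilde{\Lambda}^r_n/\langle e_{n-\ell}\rangle\cong\Lambda^r_{n-1}$, a \emph{linear} type $A$ algebra where nothing wraps around. Stratifying by the first missing simple identifies each stratum with the set $Y_{n-1,\ell}$ of Lemma~\ref{3.6}, whose cardinality $\sum_{i=\ell+1}^{r}C_{i-1}t_r(n-i)$ is computed entirely in type $A$, and summing over $\ell=0,1,\dots,r-1$ produces the factor $i$. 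If you want a direct bijection out of $\tilt\widetilde{\Lambda}^r_n$ of the kind you propose, you would essentially need to transport the structure through Theorem~\ref{2.2}, and the resulting correspondence is not the naive ``cut at the summand through $S_n$.''
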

\begin{proof}Considering the set $K^{np}_{n,\ell}$ consisting of all modules in $K_{n,\ell}$ which do not have projective $\widetilde{\Lambda}^r_n$-modules as direct summands.

Note that the indecomposable projective $\widetilde{\Lambda}^r_n/\langle e_{n-\ell}\rangle$-modules corresponding to the last $r-1$ points (the length of them is at most $r-1$) are not projective $\widetilde{\Lambda}^r_n$-modules, we have
$\big|K^{np}_{n,\ell}\big|=|Y_{n-1,\ell}|$ since $\widetilde{\Lambda}^r_n/\langle e_{n-\ell}\rangle\cong \zL^r_{n-1}$.
Thus,
\begin{align*}
\big|\pstilt_{np}\widetilde{\Lambda}^r_n\big|
&= \sum\limits^{n-1}_{\ell=0}\big|K^{np}_{n,\ell}\big| =\sum\limits^{n-1}_{\ell=0}|Y_{n-1,\ell}|\\
&=\sum\limits^{r-1}_{\ell=0}\sum\limits^{r}_{i=l+1}C_{i-1}\cdot t_r(n-i)\quad \text{(by~Lemma~\ref{3.6})}\\
&=\sum^r_{i=1}i\cdot C_{i-1}\cdot t_r(n-i).
\end{align*}
Therefore, the assertion follows from Theorem \ref{2.2}.
\end{proof}

Now, we are ready to prove our main result of this section.

\begin{Theorem}\label{3.8}
We have
\begin{gather*} \tilde t_r(n)=\sum\limits^r_{i=1}C_{i-1}\cdot \tilde t_r(n-i). \end{gather*}
\end{Theorem}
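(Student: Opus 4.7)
The plan is to prove the identity purely algebraically by combining Proposition~\ref{3.7} with Lemma~\ref{2.4}: Proposition~\ref{3.7} expresses $\tilde t_r(n)$ as an explicit linear combination of the values $t_r(n-i)$, and Lemma~\ref{2.4} gives a Catalan convolution recurrence for the $t_r(m)$'s. Substituting the former into the right-hand side of the desired identity and then invoking the latter should collapse the resulting double sum back to $\tilde t_r(n)$.

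Concretely, I would first use Proposition~\ref{3.7} to rewrite each $\tilde t_r(n-j)$ appearing on the right-hand side, obtaining
\begin{gather*}
\sum_{j=1}^{r} C_{j-1}\cdot\tilde t_r(n-j) \;=\; \sum_{j=1}^{r}\sum_{i=1}^{r} C_{j-1}\cdot i\,C_{i-1}\cdot t_r(n-i-j).
\end{gather*}
Next I would interchange the two summations so that the factor $i\,C_{i-1}$ is pulled outside:
\begin{gather*}
\sum_{i=1}^{r} i\,C_{i-1}\sum_{j=1}^{r} C_{j-1}\cdot t_r\bigl((n-i)-j\bigr).
\end{gather*}
By Lemma~\ref{2.4} applied at argument $n-i$, the inner sum equals $t_r(n-i)$, so the whole expression reduces to $\sum_{i=1}^{r} i\,C_{i-1}\cdot t_r(n-i)$, which is $\tilde t_r(n)$ by a second application of Proposition~\ref{3.7}.

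The only real obstacle is the bookkeeping for small $n$: for the inner application of Lemma~\ref{2.4} to be valid one needs $n-i\ge 1$ for each $i\in\{1,\dots,r\}$, i.e.\ $n\ge r+1$. For $n\le r$ one should either invoke the convention $t_r(m)=\tilde t_r(m)=0$ for $m<0$ (together with $t_r(0)=\tilde t_r(0)=1$) and verify that Lemma~\ref{2.4} and Proposition~\ref{3.7} are still compatible with these initial values, or check the finitely many base cases directly using the explicit values assembled in the tables of Section~\ref{sect 5}. Once the boundary cases are absorbed, the two-line swap-of-summation argument above completes the proof.
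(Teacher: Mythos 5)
Your proposal is correct and is essentially the paper's own argument: both substitute the formula of Proposition~\ref{3.7} into the right-hand side, interchange the two summations, and collapse the inner sum via Lemma~\ref{2.4}. The boundary bookkeeping you flag is handled in the paper by the standing conventions $t_r(m)=0$ for $m<0$ and $t_r(0)=1$, under which the recurrences hold for all $n$.
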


\begin{proof}By Proposition \ref{3.7}, we have $\tilde t_r(n)=\sum\limits^r_{\ell=1}\ell\cdot C_{\ell-1}\cdot t_r(n-\ell)$.
Thus,
\begin{gather*}
\tilde t_r(n)-\sum\limits^r_{i=1}C_{i-1}\cdot\tilde t_r(n-i)
=\sum\limits^r_{\ell=1}\ell\cdot C_{\ell-1}\cdot t_r(n-\ell)\\
\hphantom{\tilde t_r(n)-\sum\limits^r_{i=1}C_{i-1}\cdot\tilde t_r(n-i)=}{}
-\sum\limits^r_{i=1}C_{i-1}\cdot \left(\sum\limits^r_{\ell=1}\ell\cdot C_{\ell-1}\cdot t_r(n-i-\ell)\right)\\
\hphantom{\tilde t_r(n)-\sum\limits^r_{i=1}C_{i-1}\cdot\tilde t_r(n-i)}{}
=\sum\limits^r_{\ell=1}\ell\cdot C_{\ell-1}\cdot \left(t_r(n-\ell)-\sum\limits^r_{i=1}C_{i-1}\cdot t_r(n-\ell-i)\right)\\
\hphantom{\tilde t_r(n)-\sum\limits^r_{i=1}C_{i-1}\cdot\tilde t_r(n-i)}{} =0. \quad \text{(by~Lemma~\ref{2.4})}
\end{gather*}
Hence, $\tilde t_r(n)=\sum\limits^r_{i=1}C_{i-1}\cdot\tilde t_r(n-i)$.\end{proof}

The following proposition and its proof are similar to \cite[Theorem 4.1 (3)]{S2018}. For convenience, we include a proof here. We shall use the following notation.

For every positive integer $r$, let $F_r(X)=\sum\limits^r_{i=0}c_{i}\cdot X^{r-i}$ where $c_0=1$ and $c_i=-C_{i-1}$ for $i=1,2,\dots, r$. Let
\begin{gather*}{\bf E}_n(X_1,X_2,\dots,X_r)=\sum_{J\subseteq\{1,2,\dots,r\}, |J|=n}\prod_{j\in J}X_j \end{gather*} be the $n$-th elementary symmetric polynomial, $n=0,1,2,\dots, r.$ Let
\begin{gather*}{\bf H}_n(X_1,X_2,\dots,X_r)=\sum\limits_{\substack{t_1,t_2,\dots,t_r\in \mathbb{Z}_{\geqslant0}\\ t_1+t_2+\dots+t_r=n}}X^{t_1}_1X^{t_2}_2\cdots X^{t_r}_r
\qquad \text{for all }n\in \mathbb{Z},\\
{\bf P}_n(X_1,X_2,\dots,X_r)=\sum\limits^r_{i=1}X_i^n \quad \text{for all }n\geqslant 1.\end{gather*}
In particular, we have ${\bf E}_0=1$, ${\bf H}_0=1$, and ${\bf H}_n=0 $ for $n< 0$.

\begin{Proposition}\label{3.9}Let $\xi_1,\xi_2,\dots,\xi_r$ be the roots $($not necessarily distinct$)$ of the polyno\-mial $F_r(X)$.
Then we have
\begin{enumerate}\itemsep=0pt
\item[$(1)$] ${t}_r(n)=\sum\limits_{\substack{t_1,t_2,\dots,t_r\in \mathbb{Z}_{\geqslant0}\\ t_1+t_2+\dots+t_r=n}}\xi^{t_1}_1\xi^{t_2}_2\cdots \xi^{t_r}_r$,
\item[$(2)$] $\widetilde{t}_r(n)=\sum\limits^r_{i=1}\xi^n_i$.
\end{enumerate}
\end{Proposition}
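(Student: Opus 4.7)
My approach is to reduce both parts to formal power series identities built from the reciprocal polynomial of $F_r$. If $\xi_1,\ldots,\xi_r$ denote the roots of $F_r$, I set $G(t) := t^r F_r(1/t)$, so that
\begin{gather*}
G(t) = \prod_{i=1}^{r}(1-\xi_i t) = 1 - \sum_{i=1}^{r} C_{i-1}\, t^i,
\end{gather*}
where the first equality uses $F_r(X) = \prod_i (X-\xi_i)$ and the second uses the explicit form of $F_r$. Consequently $H(t) := 1/G(t) = \prod_i (1-\xi_i t)^{-1}$, and expanding each geometric factor multinomially yields $H(t) = \sum_{n\geq 0} {\bf H}_n(\xi_1,\ldots,\xi_r)\, t^n$.

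For part (1), I would compare coefficients of $t^n$ for $n \geq 1$ on both sides of $G(t)H(t) = 1$. This gives ${\bf H}_n(\xi) = \sum_{i=1}^{r} C_{i-1}\, {\bf H}_{n-i}(\xi)$, which is exactly the recurrence of Lemma~\ref{2.4} satisfied by $t_r(n)$. Both sequences share the initial conditions $a_0 = 1$ and $a_n = 0$ for $n < 0$, so they coincide by induction on $n$.

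For part (2), I would combine Proposition~\ref{3.7} with part (1) at the level of generating functions. Using $t_r(n) = {\bf H}_n(\xi)$ and the convention $t_r(n) = 0$ for $n<0$, I get
\begin{gather*}
\sum_{n\geq 1} \tilde t_r(n)\, t^n \;=\; \left(\sum_{i=1}^{r} i\, C_{i-1}\, t^i\right) H(t).
\end{gather*}
Differentiating the formula for $G$ shows $\sum_{i=1}^{r} i\, C_{i-1}\, t^i = -t\, G'(t)$, so the right-hand side becomes $-t\, G'(t)/G(t) = t\cdot\bigl(\log H(t)\bigr)'$. Since $\log H(t) = -\sum_{j} \log(1-\xi_j t)$, taking the derivative and multiplying by $t$ gives $\sum_{j=1}^{r} \xi_j t/(1-\xi_j t) = \sum_{n\geq 1} {\bf P}_n(\xi)\, t^n$. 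Comparing coefficients yields $\tilde t_r(n) = {\bf P}_n(\xi)$ for $n \geq 1$.

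There is no substantial obstacle here; the argument is standard formal-power-series manipulation, essentially the generating-function form of the classical Newton identities. The only care needed is in tracking the ranges of summation and the initial data, and the paper's convention $t_r(n) = 0$ for $n < 0$ makes this bookkeeping routine.
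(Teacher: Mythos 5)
Your proof is correct and follows essentially the same route as the paper: both arguments identify $t_r(n)$ with the complete homogeneous symmetric polynomial ${\bf H}_n(\xi_1,\dots,\xi_r)$ by checking that the two sequences satisfy the recurrence of Lemma~\ref{2.4} with the same initial data, and then deduce~(2) from Proposition~\ref{3.7} together with Newton's identity relating ${\bf E}_i$, ${\bf H}_{n-i}$ and ${\bf P}_n$. The only difference is presentational: where the paper invokes Vieta's formulas and cites \cite[Lemma~4.8]{S2018} for the needed symmetric-function identities, you re-derive them from the factorization $t^rF_r(1/t)=\prod_{i=1}^r(1-\xi_i t)$ and its logarithmic derivative, which makes the argument self-contained.
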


\begin{proof} Using Vieta's formula on symmetric polynomials, we have \begin{gather*}{\bf E}_i(\xi_1,\xi_2,\dots,\xi_r)=(-1)^ic_i\end{gather*} for $i=0,1,2,\dots,r$. By \cite[Lemma 4.8]{S2018}, we have
\begin{gather*}\sum\limits_{i=0}^rc_i{\bf H}_{n-i}(\xi_1,\xi_2,\dots,\xi_r)=0.\end{gather*}
 On the other hand, Lemma \ref{2.4} yields $t_r(n) =\sum\limits^r_{i=1}C_{i-1}\cdot t_r(n-i)$ which implies
 \begin{gather*}\sum^r_{i=0}c_{i}\cdot t_r(n-i)=0.\end{gather*} Therefore
 \begin{gather}\label{eq sym}
\sum^r_{i=0}c_{i}\cdot (t_r(n-i)-{\bf H}_{n-i}(\xi_1,\xi_2,\dots,\xi_r))=0.
\end{gather}
Note that $t_r(0)=1={\bf H}_{0}(\xi_1,\xi_2,\dots,\xi_r)$ and $t_r(n)=0={\bf H}_{n}(\xi_1,\xi_2,\dots,\xi_r)$ for $n< 0$.
Therefore, using induction and equation (\ref{eq sym}), we see that for all $n$ \begin{gather*}t_r(n)={\bf H}_{n}(\xi_1,\xi_2,\dots,\xi_r)=\sum\limits_{\substack{t_1,t_2,\dots,t_r\in \mathbb{Z}_{\geqslant0}\\ t_1+t_2+\cdots+t_r=n}}\xi^{t_1}_1\xi^{t_2}_2\cdots \xi^{t_r}_r.\end{gather*}
In Proposition \ref{3.7}, we have shown the following relation
\begin{gather*}\tilde t_r(n)=\sum^r_{i=1}-i\cdot c_i\cdot t_r({n-i}).\end{gather*}
Hence,\begin{align*}
\tilde t_r(n)&=\sum^r_{i=1}-i\cdot c_{i}\cdot t_r({n-i})
 =\sum^r_{i=1}-i\cdot c_{i}\cdot {\bf H}_{n-i}(\xi_1,\xi_2,\dots,\xi_r)\\
&=\sum^r_{i=1}-i\cdot (-1)^i\cdot {\bf E}_i(\xi_1,\xi_2,\dots,\xi_r)\cdot {\bf H}_{n-i}(\xi_1,\xi_2,\dots,\xi_r)\\
&=\sum^r_{i=1}(-1)^{i-1}\cdot i\cdot {\bf E}_i(\xi_1,\xi_2,\dots,\xi_r)\cdot {\bf H}_{n-i}(\xi_1,\xi_2,\dots,\xi_r)\\
&={\bf P}_{n}(\xi_1,\xi_2,\dots,\xi_r) \quad (\text{by~\cite[Lemma 4.8]{S2018}})\\
&=\sum\limits^r_{i=1}\xi^n_i.\tag*{\qed}
\end{align*} \renewcommand{\qed}{}
\end{proof}

For $r=2$, we obtain the Fibonacci recurrence $\tilde t_2(n)=\tilde t_2(n-1)+\tilde t_2(n-2)$; however, with different initial conditions. Thus we obtain Lucas numbers and we have the following formula.
 \begin{Corollary}\label{3.10}
$\tilde t_2(n)=\big(\frac{1+\sqrt{5}}{2}\big)^n+\big(\frac{1-\sqrt{5}}{2}\big)^n$.
\end{Corollary}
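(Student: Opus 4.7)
The plan is to specialize Proposition \ref{3.9}(2) to the case $r=2$. First I would write down $F_2(X)$ explicitly: since $c_0=1$, $c_1=-C_0=-1$ and $c_2=-C_1=-1$, we have
\begin{gather*}
F_2(X)=X^2-X-1.
\end{gather*}
Next I would solve this quadratic by the standard formula to obtain the two roots
\begin{gather*}
\xi_1=\frac{1+\sqrt{5}}{2},\qquad \xi_2=\frac{1-\sqrt{5}}{2}.
\end{gather*}

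Then I would invoke Proposition \ref{3.9}(2), which gives $\tilde t_r(n)=\sum_{i=1}^r \xi_i^n$. For $r=2$ this yields immediately
\begin{gather*}
\tilde t_2(n)=\xi_1^n+\xi_2^n=\left(\frac{1+\sqrt{5}}{2}\right)^n+\left(\frac{1-\sqrt{5}}{2}\right)^n,
\end{gather*}
which is the desired formula. As a consistency check, one could independently verify the preamble remark: Theorem \ref{3.8} with $r=2$ gives the recurrence $\tilde t_2(n)=C_0\cdot\tilde t_2(n-1)+C_1\cdot\tilde t_2(n-2)=\tilde t_2(n-1)+\tilde t_2(n-2)$, which is the Fibonacci/Lucas recurrence, matching the characteristic polynomial $F_2(X)=X^2-X-1$.

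There is no genuine obstacle here; the entire content of the corollary has already been packaged into Proposition \ref{3.9}(2), and the only step is to compute the roots of a specific quadratic. The only thing worth being careful about is matching the sign conventions $c_i=-C_{i-1}$ when writing $F_2(X)$, so that one does not inadvertently obtain $X^2+X+1$ or some other wrong polynomial.
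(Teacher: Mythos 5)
Your proof is correct and follows exactly the route the paper intends: Corollary~\ref{3.10} is stated as an immediate consequence of Proposition~\ref{3.9}(2) with $r=2$, where $F_2(X)=X^2-X-1$ has roots $\frac{1\pm\sqrt{5}}{2}$. Your sign check ($c_1=-C_0=-1$, $c_2=-C_1=-1$) and the consistency check against the Lucas recurrence from Theorem~\ref{3.8} are both accurate.
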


\section[The number of support $\tau$-tilting modules over Nakayama algebras]{The number of support $\boldsymbol{\tau}$-tilting modules\\ over Nakayama algebras}\label{sect 4}

 In this section, we will apply our results to give a new proof of a theorem by Asai, see \cite[Theorem~4.1(1) and~(2)]{S2018}

 Applying Proposition~\ref{3.2}, we obtain the following recurrence relation for the number of support $\tau$-tilting modules over $\Lambda^r_n$. See Table~\ref{t2} in Section~\ref{sect 5} for explicit values of $s_r(n)$.

 \begin{Proposition}[{\cite[Theorem 4.1(1)]{S2018}}]\label{4.1}
\begin{gather*} s_r(n)=2s_r(n-1)+\sum^r_{i=2}C_{i-1}\cdot s_r(n-i).\end{gather*}
 \end{Proposition}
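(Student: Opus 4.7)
The plan is to derive the claimed recurrence directly from Proposition \ref{3.2}(4) and Lemma \ref{2.4}, working throughout with the convention $t_r(k)=s_r(k)=0$ for $k<0$ so that boundary terms vanish automatically.

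First, I would specialize Proposition \ref{3.2}(4) to $\Lambda=\Lambda_n^r$. Using $\Lambda_{<i}\cong \Lambda_{i-1}^r$, $\Lambda_{>i}\cong \Lambda_{n-i}^r$, and $\ell(P_1)=\min(r,n)$, and noting that padding each finite sum with zero terms does not change its value, one obtains
\begin{gather*}
s_r(n)=\sum_{i=1}^{n} s_r(i-1)\, t_r(n-i) + \sum_{i=1}^{r} C_{i-1}\, t_r(n-i).
\end{gather*}
By Lemma \ref{2.4} (applied in its extended form $t_r(m)=\sum_{i=1}^{r} C_{i-1}t_r(m-i)$ valid for all $m\geq 1$), the second sum collapses to $t_r(n)$, giving the key identity
\begin{gather*}
(\ast)\qquad s_r(n)=\sum_{i=1}^{n} s_r(i-1)\, t_r(n-i) + t_r(n).
\end{gather*}

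Next, I would expand the right-hand side of the target recurrence by plugging $(\ast)$ into each $s_r(n-i)$ for $1\leq i\leq r$ and interchanging the order of summation:
\begin{gather*}
\sum_{i=1}^{r} C_{i-1}\, s_r(n-i) = \sum_{j=1}^{n-1} s_r(j-1)\!\left(\sum_{i=1}^{r} C_{i-1}\, t_r(n-j-i)\right) + \sum_{i=1}^{r} C_{i-1}\, t_r(n-i).
\end{gather*}
Applying Lemma \ref{2.4} to the inner bracket (with $m=n-j\geq 1$) collapses it to $t_r(n-j)$, while the last sum again collapses to $t_r(n)$. Thus
\begin{gather*}
\sum_{i=1}^{r} C_{i-1}\, s_r(n-i) = \sum_{j=1}^{n-1} s_r(j-1)\, t_r(n-j) + t_r(n).
\end{gather*}
Comparing with $(\ast)$ and splitting off the $j=n$ term $s_r(n-1)\,t_r(0)=s_r(n-1)$, the right-hand side equals $s_r(n)-s_r(n-1)$. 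Rearranging and using $C_0=1$ to absorb one copy of $s_r(n-1)$ into the sum gives exactly the stated recurrence $s_r(n)=2s_r(n-1)+\sum_{i=2}^{r} C_{i-1}\, s_r(n-i)$.

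The main obstacle is purely bookkeeping: making sure that Lemma \ref{2.4} really applies inside the nested sum, in particular that the inner recurrence is used only at indices $m\geq 1$ (the $j=n$ case must be separated out, since there Lemma \ref{2.4} would contribute $0$ instead of the correct value $t_r(0)=1$). The convention that $t_r$ and $s_r$ vanish on negative integers makes the index swap and the extension of summation ranges unambiguous, and no further ingredients beyond Proposition \ref{3.2} and Lemma \ref{2.4} are needed.
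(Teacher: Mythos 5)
Your proof is correct and follows essentially the same route as the paper: both reduce to the convolution identity $s_r(n)=\sum_{i} t_r(i-1)\,s_r(n-i)+t_r(n)$ coming from Proposition~\ref{3.2} (the paper uses part~(1), you use part~(4), which agree after reindexing $i\mapsto n+1-i$) and then invoke Lemma~\ref{2.4}. The only difference is cosmetic: the paper computes $s_r(n)-s_r(n-1)$ and reassembles the terms via Proposition~\ref{3.2}(1), whereas you expand $\sum_{i=1}^r C_{i-1}s_r(n-i)$ and collapse it back to $s_r(n)-s_r(n-1)$ --- the same manipulation run in the opposite direction, with the boundary term $j=n$ correctly isolated.
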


\begin{proof} Since $\Lambda=\Lambda^r_n$, we have $\Lambda_{<i}\cong \Lambda^r_{i-1}$ and $\Lambda_{>i}\cong \Lambda^r_{n-i}$. Thus Proposition \ref{3.2}(1) yields
\[
s_r(n)=\sum\limits^{n}_{i=1}t_r(i-1)\cdot s_r(n-i)+ t_r(n).
\]
Therefore,
\begin{gather*}
s_r(n)-s_r(n-1)
= \sum\limits^{n}_{i=1}t_r(i-1)\cdot s_r(n-i)+ t_r(n)\\
\hphantom{s_r(n)-s_r(n-1)=}{} - \left(\sum\limits^{n-1}_{i=1}t_r(i-1)\cdot s_r(n-1-i)+ t_r(n-1)\right)\\
\hphantom{s_r(n)-s_r(n-1)}{} =s_r(n-1)+ \sum\limits^{n-1}_{i=1}(t_r(i)-t_r(i-1))\cdot s_r(n-1-i)+(t_r(n)-t_r(n-1)).
\end{gather*}
By Lemma \ref{2.4}, for all $i$ we have
\[ t_r(i)=t_r(i-1) + \sum\limits^r_{\ell=2}C_{\ell-1}\cdot t_r(i-\ell).\] Using this equality in the identity above, we get
\begin{gather*}
s_r(n)-s_r(n-1)=
s_r(n-1)+\sum\limits^{n-1}_{i=1}\sum^r_{\ell=2}C_{\ell-1}\cdot t_r(i-\ell)\cdot s_r(n-1-i)\\
\hphantom{s_r(n)-s_r(n-1)=}{} +\sum^r_{\ell=2}C_{\ell-1}\cdot t_r(n-\ell)
\end{gather*}
and since $s_r(n-\ell-i)=0 $ whenever $i>n-\ell$, we obtain
\begin{gather*}
s_r(n)-s_r(n-1)=s_r(n-1)+ \sum^r_{\ell=2}C_{\ell-1}\cdot\sum^{n-\ell}_{i=1}t_r(i-1)\cdot s_r(n-\ell-i)\\
\hphantom{s_r(n)-s_r(n-1)=}{}
+\sum^r_{\ell=2}C_{\ell-1}\cdot t_r(n-\ell)\\
\hphantom{s_r(n)-s_r(n-1)}{}
=s_r(n-1)+ \sum^r_{\ell=2}C_{\ell-1}\cdot\left(\sum^{n-\ell}_{i=1}t_r(i-1)\cdot s_r(n-\ell-i)+ t_r(n-\ell)\right)\\
\hphantom{s_r(n)-s_r(n-1)}{}=s_r(n-1)+ \sum^r_{\ell=2}C_{\ell-1} \cdot s_r(n-\ell) \quad \text{by Proposition \ref{3.2}(1)}.
\end{gather*}
Hence, $s_r(n)=2s_r(n-1)+\sum\limits^r_{i=2}C_{i-1}\cdot s_r(n-i).$
\end{proof}

\begin{Remark}\label{4.2}
If $r\ge n$ then $\Lambda_n^r$ is a hereditary algebra of Dynkin type $\mathbb{A}_n$ and the support $\tau$-tilting modules are in bijection with the clusters in the corresponding cluster algebra. In particular, $s_r(n)=C_{n+1}$.
This fact also can be obtained directly from Proposition \ref{4.1} and the identity
$C_n=\sum\limits^n_{i=1}C_{i-1}\cdot C_{n-i}$ (see Remark \ref{2.5}). Indeed, the following equation
\begin{gather*}
s_n(n)-C_{n+1}= 2s_n({n-1})+\sum^n_{i=2}C_{i-1}\cdot s_n({n-i})-\left(\sum^{n+1}_{i=1}C_{i-1}\cdot C_{n+1-i}\right)\\
\hphantom{s_n(n)-C_{n+1}}{} = C_0\cdot s_n({n-1})+\sum^n_{i=2}C_{i-1}\cdot s_n({n-i})-\left(\sum^{n}_{i=1}C_{i-1}\cdot C_{n+1-i}\right)\\
\hphantom{s_n(n)-C_{n+1}=}{}+ s_n({n-1})-C_{n}\cdot C_0\\
\hphantom{s_n(n)-C_{n+1}}{} =\sum^{n}_{i=1}C_{i-1}\cdot (s_n({n-i})-C_{n+1-i})+(s_n({n-1})-C_{n}\cdot C_0)\\
\hphantom{s_n(n)-C_{n+1}}{} =\sum^{n}_{i=1}C_{i-1}\cdot (s_{n-i}({n-i})-C_{n+1-i})+(s_{n-1}({n-1})-C_{n})
\end{gather*}
implies $s_n(n)=C_{n+1}$ by induction.
\end{Remark}

Applying Theorem~\ref{3.8}, we obtain the following recurrence relation for the number of support $\tau$-tilting modules over $\widetilde{\Lambda}^r_n$. See Table \ref{t4} in Section~\ref{sect 5} for explicit values of $\widetilde{s}_r(n)$.

 \begin{Proposition}[{\cite[Theorem 4.1(2)]{S2018}}]\label{4.3}
\begin{gather*}\tilde s_r(n)=2 \tilde s_r(n-1)+\sum\limits^r_{i=2}C_{i-1}\cdot \tilde s_r(n-i).\end{gather*}
 \end{Proposition}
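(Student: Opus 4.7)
The plan is to mirror the proof of Proposition~\ref{4.1}, replacing the decomposition from Proposition~\ref{3.2}(1) with its cyclic analogue from Proposition~\ref{3.5}. Combining $\tilde s_r(n) = \tilde t_r(n) + |\pstilt\widetilde{\Lambda}^r_n|$ with Proposition~\ref{3.5} yields
\[
\tilde s_r(n) = \tilde t_r(n) + \sum_{i=1}^{n-1} i\cdot t_r(i-1)\cdot s_r(n-i-1) + n\cdot t_r(n-1),
\]
an expression for $\tilde s_r(n)$ purely in terms of $\tilde t_r$, $t_r$ and $s_r$. My plan is to form $A_n := \tilde s_r(n) - 2\tilde s_r(n-1) - \sum_{j=2}^{r}C_{j-1}\,\tilde s_r(n-j)$, split it along the three summands of the decomposition above, and show each piece using the recurrences already at my disposal.

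First, the piece involving only $\tilde t_r$ collapses to $-\tilde t_r(n-1)$ by Theorem~\ref{3.8}. Second, for the ``boundary'' piece $n\cdot t_r(n-1)$, I apply Lemma~\ref{2.4} to rewrite $t_r(n-1)$ as $t_r(n-2)+\sum_{j=2}^{r}C_{j-1}\,t_r(n-1-j)$, and then recognize the resulting coefficient sum $\sum_{j\ge 2} j\,C_{j-1}\,t_r(n-1-j)$ as $\tilde t_r(n-1)-t_r(n-2)$ via Proposition~\ref{3.7}; the boundary piece then simplifies to $\tilde t_r(n-1)-(n-1)t_r(n-2)$. Third, for the middle double sum I peel off its $i=n-1$ term, which equals $(n-1)t_r(n-2)$ because $s_r(0)=1$, and apply Proposition~\ref{4.1} in the form $s_r(m)-2s_r(m-1)=\sum_{\ell\ge 2}C_{\ell-1}\,s_r(m-\ell)$ to each difference $s_r(n-i-1)-2s_r(n-i-2)$ that arises. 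The resulting double sum matches the subtracted sum $\sum_{j\ge 2}C_{j-1}\sum_i i\cdot t_r(i-1)\,s_r(n-i-1-j)$ up to tails with $i\in[n-j,n-2]$, all of whose summands vanish because $n-i-1-j<0$ forces $s_r(n-i-1-j)=0$. Hence the middle piece equals exactly $(n-1)t_r(n-2)$, and the three contributions cancel, so $A_n=0$.

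The principal obstacle is the index-range bookkeeping in the middle piece: after applying Proposition~\ref{4.1} the natural upper limit of the inner $i$-sum is $n-2$, whereas in the subtracted expression it is $n-j-1$, and one has to recognize that the ``excess'' tail is precisely the range on which $s_r$ vanishes by the convention $s_r(m)=0$ for $m<0$. Once this is organized cleanly, the four ingredient recurrences (Lemma~\ref{2.4}, Proposition~\ref{3.7}, Theorem~\ref{3.8}, Proposition~\ref{4.1}) do all the real work, and no further representation-theoretic input is needed.
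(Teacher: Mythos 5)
Your proposal is correct and takes essentially the same route as the paper: it starts from the same identity $\tilde s_r(n)=\tilde t_r(n)+\sum_{i}i\cdot t_r(i-1)s_r(n-i-1)+n\cdot t_r(n-1)$ obtained from Proposition~\ref{3.5}, forms the same difference, and cancels it using the same four ingredients (Lemma~\ref{2.4}, Proposition~\ref{3.7}, Theorem~\ref{3.8}, Proposition~\ref{4.1}), including the same observation that the mismatched inner index ranges are harmless because $s_r(m)=0$ for $m<0$. The only deviation is a cosmetic regrouping of the boundary terms ($n\cdot t_r(n-1)$ expanded first via Lemma~\ref{2.4} rather than collected at the end), which yields the identical cancellation.
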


\begin{proof}We need to show
\[
\tilde s_r(n)-2\tilde s_r(n-1)-\sum\limits^r_{i=2}C_{i-1}\cdot \tilde s_r(n-i)=0.
\]
Using Proposition~\ref{3.5} and the equation $\tilde s_r(n)=\big|\pstilt\widetilde{\Lambda}^r_n\big|+\tilde t_r(n)$, we obtain
\begin{gather*}\tilde s_r(n)=\sum\limits^{n-1}_{\ell=1}\ell\cdot t_r(\ell-1)\cdot s_r(n-\ell-1)+n\cdot t_r(n-1)+\tilde t_r(n).\end{gather*}
 Therefore,
\begin{gather*}
\tilde s_r(n)-2\tilde s_r(n-1)-\sum^r_{i=2}C_{i-1}\cdot \tilde s_r(n-i)\\
{}=\sum\limits^{n-1}_{\ell=1}\ell\cdot t_r(\ell-1)\cdot s_r(n-\ell-1)+n\cdot t_r(n-1)+\tilde t_r(n)\\
\quad{} -2\left(\sum\limits^{n-2}_{\ell=1}\ell\cdot t_r(\ell-1)\cdot s_r(n-\ell-2)+(n-1)\cdot t_r(n-2)+\tilde t_r(n-1)\right)\\
\quad{} -\sum^r_{i=2}C_{i-1}\cdot \!\left(\sum\limits^{n-i-1}_{\ell=1}\ell\cdot t_r(\ell-1)\cdot s_r(n-i-\ell-1)+(n-i)\cdot t_r(n-i-1)+\tilde t_r(n-i)\right).
\end{gather*}
Note that every term in the summation
 $\sum\limits^r_{i=2}C_{i-1}\sum\limits^{n-2}_{\ell=n-i}\ell\cdot t_r(\ell-1)\cdot s_r(n-i-\ell-1)$ is zero, and therefore the whole expression is equal to
\begin{gather*}
= \sum\limits^{n-2}_{\ell=1}\ell\cdot t_r(\ell-1)\cdot\left(s_r(n-\ell-1)-2s_r(n-\ell-2)-\sum^r_{i=2}C_{r-1}\cdot s_r(n-\ell-1-i)\right)\\
\quad{}+n\cdot t_r(n-1)+\big((n-1)\cdot t_r(n-2)-2(n-1)\cdot t_r(n-2)\big)\\
\quad{}-\sum^r_{i=2}(n-i)\cdot C_{i-1}\cdot t_r(n-i-1)+\left(\tilde t_r(n)-2\tilde t_r(n-1)-\sum^r_{i=2}C_{i-1}\cdot\tilde t_r(n-i)\right).
\end{gather*}

Now, the parenthesis in the first sum is zero, by Proposition \ref{4.1}, the large parenthesis in the second row can be included in the first sum of the third row as the $i=1$ term, and the parenthesis in the third row is equal to $-\tilde t_r(n-1)$ by Theorem \ref{3.8}. So the whole expression is equal to
\begin{gather*}
=n\cdot t_r(n-1)-\sum^r_{i=1}(n-i)\cdot C_{i-1}\cdot t_r(n-i-1)-\tilde t_r(n-1)\\
=n\cdot \left(t_r(n-1)-\sum^r_{i=1}C_{i-1}\cdot t_r(n-i-1)\right)+\sum^r_{i=1}i\cdot C_{i-1}\cdot t_r(n-i-1)-\tilde t_r(n-1)\\
=\sum^r_{i=1}i\cdot C_{i-1}\cdot t_r(n-i-1)-\tilde t_r(n-1)\quad \text{(by~Lemma~\ref{2.4})}\\
=\tilde t_r(n-1)-\tilde t_r(n-1) \quad \text{(by~Proposition \ref{3.7})}\\
=0.\tag*{\qed}
\end{gather*}\renewcommand{\qed}{}
\end{proof}

\section{Examples}\label{sect 5}
In this section, we give examples of the numbers of (support) $\tau$-tilting modules over $\zL^r_n$ and $\widetilde{\Lambda}^r_n$ (see Tables \ref{t1}--\ref{t4}).

\begin{table}[th!]\centering
\caption{The number of $\tau$-tilting modules of $\zL^r_n$.}\label{t1}
\vspace{1mm}
\begin{tabular}{l|cccccccccccc}
 \hline
 \diagbox{$r$}{$t_r(n)$}{$n$} & 1 & 2 & 3&4&5&6&7&8&9&10&11&12\\
 \hline
1 & 1&1&1&1&1&1&1&1&1&1&1&1\\
2 &1& 2 & 3 &5&8&13&21&34&55 &89&144&233\\
3 &1& 2 & 5 &9&18&37&73&146&293&585&1170&2341\\
4 &1& 2 & 5 & 14&28&62&143&331&738&1665&3780&8576\\
5 &1&2 & 5 & 14&42&90&213&527& 1326&3317&8022&19608\\
6 &1&2 & 5 & 14&42&132&297&737& 1914&5081&13566&35862\\
 \hline \end{tabular}
\end{table}

\begin{table}[th!]\centering
\caption{The number of support $\tau$-tilting modules of $\zL^r_n$.}\label{t2}
\vspace{1mm}
\begin{tabular}{l|cccccccccccc}
 \hline
 \diagbox{$r$}{$s_r(n)$}{$n$} & 1 & 2 & 3&4&5&6&7&8&9&10&11&12\\
 \hline
1 & 2&4& 8&16&32&64&128&256&512&1024&2048&4096 \\
2 & 2 & 5 &12&29&70&169&408&985&2378&5741&13860&33461 \\
3 & 2 & 5 &14&37&98&261&694&1845&4906&13045&34686&92229\\
4 & 2 & 5 & 14&42&118&331&934&2645&7476&21120&59676&168649\\
5 &2 & 5 & 14&42&132&387&1130& 3317&9786&28932&85352&251613\\
6 &2 & 5 & 14&42&132&429&1298& 3905&11802&35862&109376&333933\\
 \hline \end{tabular}
\end{table}

 \begin{table}[th!]\centering
\caption{The number of $\tau$-tilting modules of $\widetilde{\Lambda}^r_n$.}\label{t3}
\vspace{1mm}
\begin{tabular}{l|cccccccccccc}
 \hline
 \diagbox{$r$}{$\tilde t_r(n)\vphantom{\Big|}$}{$n$} & 1 & 2 & 3&4&5&6&7&8&9&10&11&12\\
 \hline
1 & 1&1& 1&1&1&1&1&1&1&1&1&1\\
2 & 1 & 3 &4&7&11&18&29&47&76&123&199&322\\
3 & 1 & 3 &10&15&31&66&127&255&514&1023&2047&4098\\
4 & 1 & 3 & 10&35&56&126&302&715&1549&3498&7897&18158\\
5 &1 & 3 & 10&35&126&210&498&1275&3313&8398&19691&48062 \\
6 &1 & 3 & 10&35&126&462&792&1947 &5203&14278&39095&104006\\
\hline \end{tabular}
\end{table}
\begin{table}[th!]\centering
\caption{The number of support $\tau$-tilting modules of $\widetilde{\Lambda}^r_n$.}\label{t4}
\vspace{1mm}
\begin{tabular}{l|cccccccccccc}
 \hline
 \diagbox{$r$}{$\tilde s_r(n)$}{$n$} & 1 & 2 & 3&4&5&6&7&8&9&10&11&12\\
 \hline
1 & 2&4& 8&16&32&64&128&256& 512&1024&2048&4096\\
2 & 2 & 6 &14&34&82&198&478&1154&2786&6726&16238&39202 \\
3 & 2 & 6 &20&50&132&354&940&2498&6644&17666&46972&124898\\
4 & 2 & 6 & 20&70&182&504&1430&4078&11504&32466&91742&259348\\
5 &2 & 6 & 20&70&252&672&1920&5646&16796&49966&147028&432724 \\
6 &2 & 6 & 20&70&252&924&2508&7326&22088&67606&208012&638356 \\
 \hline \end{tabular}
\end{table}

\subsection*{Acknowledgements}

The first author was partially supported by NSFC (Grant No.~11971225). The second author was supported by the NSF grant DMS-1800860 and by the University of Connecticut. The authors also thank the referees for the useful and detailed suggestions.

\pdfbookmark[1]{References}{ref}
\LastPageEnding

\end{document}